\newtheorem{theorem}{Theorem}[section]
\newtheorem{proposition}[theorem]{Proposition}
\newtheorem{prop}[theorem]{Proposition}
\newtheorem{definition}[theorem]{Definition}
\newtheorem{conjecture}[theorem]{Conjecture}
\newtheorem{lemma}[theorem]{Lemma}
\newtheorem{corollary}[theorem]{Corollary}
\newtheorem{remark}[theorem]{Remark}
\newtheorem{example}[theorem]{Example}
\newcommand{\N}{\mathbf{N}}
\newcommand{\ho}{\mathfrak{h}}
\newcommand{\Go}{\mathfrak{G}}
\newcommand{\go}{\mathfrak{g}}
\newcommand{\fo}{\mathfrak{f}}
\begin{document}
\title{On graphs with smallest eigenvalue at least $-3$ and their lattices}

\author{Jack H. Koolen$^{\dag\S}$, Jae Young Yang$^\ddag$, Qianqian Yang$^\dag$
\\ \\
\small $^\dag$School of Mathematical Sciences,\\
\small University of Science and Technology of China, \\
\small 96 Jinzhai Road, Hefei, 230026, Anhui, PR China\\
\small $^\S$Wen-Tsun Wu Key Laboratory of CAS,\\
\small 96 Jinzhai Road, Hefei, 230026, Anhui, PR China\\
\small $^\ddag$School of Mathematical Sciences,\\
\small Anhui University, \\
\small 111 Jiulong Road, Hefei, 230039, Anhui, PR China\vspace{4pt}\\
\small {\tt e-mail: koolen@ustc.edu.cn, piez@naver.com, xuanxue@mail.ustc.edu.cn}\vspace{-3pt}
}
\maketitle
\date{}
\vspace{-24pt}
\begin{abstract}
In this paper, we show that a connected graph with smallest eigenvalue at least  $-3$ and large enough minimal degree is $2$-integrable. This result generalizes a 1977 result of Hoffman for connected graphs with smallest eigenvalue at least $-2$.
\end{abstract}

\textbf{Keywords:} $s$-integrability, lattice, root lattice, Hoffman graph, eigenvalue

\textbf{AMS classification:} 05C50, 11H99

\section{Introduction}
(For undefined notions, see next section) Graphs with smallest eigenvalue at least $-2$ are characterized by Cameron et al. \cite{Cameron} in 1976. They showed that

\begin{theorem}\label{cameron}
Let $G$ be a connected graph with smallest eigenvalue at least $-2$. Then, either $G$ is a generalized line graph, or $G$ has at most $36$ vertices.
\end{theorem}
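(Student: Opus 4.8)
\medskip
\noindent\textbf{Proof proposal.} The plan is to turn the spectral hypothesis into a statement about a positive-definite integral lattice generated by vectors of norm $2$, to classify that lattice via the theory of root systems, and then to read off the dichotomy. First I would note that $\lambda_{\min}(G)\ge -2$ is equivalent to $M:=A(G)+2I$ being positive semidefinite, where $A(G)$ is the adjacency matrix of $G$ and $n=|V(G)|$. Factor $M=B^{\top}B$ with $B\in\mathbb{R}^{m\times n}$ and $m=\operatorname{rank}M$; the columns $v_{1},\dots,v_{n}$ of $B$ then satisfy $\langle v_{i},v_{i}\rangle=2$ and $\langle v_{i},v_{j}\rangle=(A(G))_{ij}\in\{0,1\}$ for $i\ne j$, so they are pairwise distinct and span $\mathbb{R}^{m}$. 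Put $\Lambda:=\sum_{i=1}^{n}\mathbb{Z}\,v_{i}\subseteq\mathbb{R}^{m}$: all inner products $\langle v_{i},v_{j}\rangle$ are integers, so $\Lambda$ is an integral lattice; being a sublattice of Euclidean space it is positive definite; and it is generated by vectors of norm $2$.

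Next I would identify $\Lambda$ up to isomorphism. The set $R$ of all norm-$2$ vectors of $\Lambda$ is a finite root system: it is preserved by each reflection $s_{r}\colon x\mapsto x-\langle x,r\rangle r$ ($r\in R$) because $\Lambda$ is integral and $\langle r,r\rangle=2$; it spans $\mathbb{R}^{m}$ because it contains $v_{1},\dots,v_{n}$; and it is simply laced because all roots have norm $2$. Its root lattice is $\Lambda$, since the $v_{i}$ lie in $R$ and generate $\Lambda$. Writing $R=R_{1}\perp\cdots\perp R_{t}$ as the orthogonal sum of its irreducible components, each $v_{i}$ lies in exactly one $R_{j}$; since two vertices adjacent in $G$ yield non-orthogonal $v_{i}$'s, these lie in a common component, so connectedness of $G$ forces $t=1$. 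Thus $R$ is irreducible, and by the Cartan--Killing classification of irreducible simply-laced root systems, $R$ (hence $\Lambda$) is of type $A_{n'}$, $D_{n'}$, $E_{6}$, $E_{7}$, or $E_{8}$.

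This yields the two cases of the theorem. If $\Lambda$ is of type $A_{n'}$ or $D_{n'}$ (and $A_{n'}$ embeds in $D_{n'+1}$), then in a suitable orthonormal basis $e_{1},\dots,e_{N}$ of its span every $v_{i}$ equals $\pm e_{a}\pm e_{b}$; from this one reconstructs a multigraph $H$ --- its vertices indexing the occurring basis directions, its edges and ``petals'' indexing the $v_{i}$ according to the shape of their $\pm e_{a}\pm e_{b}$ expression --- and verifies that $G$ is the generalized line graph $L(H;a_{1},\dots,a_{k})$. If instead $\Lambda$ is of type $E_{6}$, $E_{7}$, or $E_{8}$, then $m\le 8$ and $\{v_{1},\dots,v_{n}\}$ is a set of pairwise distinct roots of $E_{8}$ any two of which have inner product $0$ or $1$; a finite analysis of such ``non-obtuse'' subsets of the $E_{8}$ root system shows $n\le 36$ (and $36$ is attained).

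The reduction to root lattices is clean and is the conceptual core. I expect the genuinely laborious points to be, first, the combinatorial lemma that a connected graph represented by the roots of a $D_{n'}$-type lattice is a generalized line graph --- one must track the petal configurations carefully and exclude spurious representations --- and, second and more seriously, the sharp bound $n\le 36$ in the exceptional case, which historically is the most delicate part of the argument and amounts to classifying the extremal root configurations inside $E_{8}$. Everything else is elementary linear algebra together with the classical structure theory of root systems.
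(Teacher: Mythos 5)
\noindent The paper does not prove this theorem: it is quoted verbatim from Cameron, Goethals, Seidel and Shult \cite{Cameron}, so there is no internal proof to compare against. Your sketch correctly reconstructs the argument of that reference --- representing the graph by norm-$2$ vectors, passing to an irreducible simply-laced root lattice, and splitting into the $A_{n'}/D_{n'}$ case (generalized line graphs) versus the $E_6/E_7/E_8$ case (at most $36$ vertices) --- and it accurately flags the two genuinely laborious steps (the reconstruction of the generalized line graph from a $D_{n'}$-representation, and the extremal bound $36$ for non-obtuse root subsets of $E_8$) as the parts requiring the detailed case analysis carried out in the original source.
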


Concerning graphs with large minimal degree, Hoffman \cite{Hoff1977} showed:

\begin{theorem}\label{hfthm}
For any real number $\lambda\in (-1-\sqrt{2},-2]$, there exists a constant $f(\lambda)$ such that if a connected graph $G$ has smallest eigenvalue at
least $\lambda$ and minimal degree at least $f(\lambda)$, then $G$ is a generalized line graph.
\end{theorem}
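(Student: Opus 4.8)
\medskip\noindent\emph{Sketch of a possible proof (plan).} Fix $\lambda\in(-1-\sqrt2,-2]$. The plan is to show that, once $f(\lambda)$ is taken large enough, the bound $\theta_{\min}(G)\ge\lambda$ forces on a connected graph $G$ of minimal degree at least $f(\lambda)$ the clique structure of a generalized line graph; I would proceed in two main stages followed by a short finish.

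\emph{Stage 1 (local clique structure).} By Cauchy interlacing, every induced subgraph of $G$ has smallest eigenvalue at least $\lambda$. Blowing up each leaf of the claw $K_{1,3}$ into a $k$-clique, keeping the three cliques pairwise non-adjacent, produces a graph with an equitable $4$-partition whose quotient matrix has smallest eigenvalue $\tfrac12\bigl((k-1)-\sqrt{k^{2}+10k+1}\bigr)$, which tends to $-3$ as $k\to\infty$; since $\lambda>-3$, this ``fat claw'' is an excluded induced subgraph once $k\ge k_0(\lambda)$. Combined with the quasi-clique phenomenon for graphs of bounded smallest eigenvalue---large minimal degree together with $\theta_{\min}\ge\lambda$ forces every vertex into a clique of size linear in $\delta(G)$---this should give, for $\delta(G)\ge f(\lambda)$, a family $\mathcal C$ of pairwise almost-disjoint large maximal cliques covering all edges of $G$ apart from a bounded exceptional set, with every other vertex lying in at most two members of $\mathcal C$.

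\emph{Stage 2 (recognition as a generalized line graph).} A clique cover of multiplicity at most two lets one label each vertex of $G$ by an unordered pair of clique-indices, that is, by an edge of an auxiliary multigraph; matching this against the description of generalized line graphs as the connected graphs representable in the root lattice $D_m$---equivalently, in the language of this paper, as slim graphs of the ``line-graph'' and ``cocktail-party'' Hoffman graphs---one concludes that $G$ is a generalized line graph, provided the exceptional residue from Stage 1 is in fact empty. This is exactly where the strict inequality $\lambda>-1-\sqrt2$ is needed: any configuration producing such a residue corresponds to a Hoffman graph whose special matrix has smallest eigenvalue at most $-1-\sqrt2$, and, by a limit theorem of Hoffman, gluing copies of it along cliques of size $s$ yields graphs whose smallest eigenvalue converges to that value, so for $\delta(G)$ large enough it would fall below $\lambda$, a contradiction.

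\emph{Finish, and the main obstacle.} Alternatively, Stages 1--2 already establish $\theta_{\min}(G)\ge-2$, and since $\delta(G)\ge f(\lambda)\ge 37$ forces $|V(G)|\ge 38>36$, Theorem~\ref{cameron} then yields directly that $G$ is a generalized line graph. I expect the genuine difficulty to lie in Stage 1---converting ``$\theta_{\min}\ge\lambda$ and $\delta(G)$ large'' into an honest clique cover of bounded multiplicity, and in particular showing the bounded reservoir of exceptional vertices and edges to be empty---together with the quantitative use in Stage 2 of the gap between $\lambda$ and $-1-\sqrt2$, which is precisely what separates the admissible local blocks from the critical ones of smallest eigenvalue exactly $-1-\sqrt2$.
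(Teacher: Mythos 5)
First, a point of reference: the paper does not prove Theorem \ref{hfthm} at all --- it is quoted from Hoffman \cite{Hoff1977} as motivation. The only argument of this shape actually carried out in the paper is the proof of Theorem \ref{mainthm} (via Proposition \ref{assohoff}, Proposition \ref{forbiddenhoff}, Lemma \ref{main1} and Theorem \ref{main2}), and your plan is essentially that architecture specialized from $-3$ to $-2$: exclude a blown-up claw and the graphs $\widetilde{K}_{2m}$, extract a clique structure of multiplicity at most two (equivalently, realize $G$ as the slim graph of a fat Hoffman graph), kill the bad local configurations by the Hoffman--Ostrowski limit theorem, and finish either through the $D_n$-representation or through Theorem \ref{cameron} (your observation that minimal degree at least $37$ forces more than $36$ vertices, so Theorem \ref{cameron} applies once $\lambda_{\min}(G)\ge -2$ is known, is a clean way to close). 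Your quotient-matrix computation for the fat claw is also correct: the smallest eigenvalue $\tfrac12\bigl((k-1)-\sqrt{k^2+10k+1}\bigr)$ is a genuine eigenvalue of the graph and tends to $-3$.

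That said, what you have written is a plan in which the two load-bearing steps are asserted rather than proved. (1) The ``quasi-clique phenomenon'' --- that $\lambda_{\min}(G)\ge\lambda$, large minimal degree and the exclusion of $\widetilde{K}_{2m}$ force a cover by large maximal cliques meeting each vertex at most twice, up to a bounded residue --- is precisely the content of Proposition \ref{assohoff} and the associated-Hoffman-graph construction; it is a nontrivial Ramsey-type theorem and cannot be invoked as folklore inside a proof of Theorem \ref{hfthm}. (2) The step that empties the residue needs more than ``a limit theorem of Hoffman'': to obtain a single constant $f(\lambda)$ you must know that the obstructions form a \emph{finite} list of \emph{bounded-size} minimal forbidden fat Hoffman graphs for smallest eigenvalue $\ge -2$, and that the largest of their smallest eigenvalues is exactly $-1-\sqrt2$; only then does Theorem \ref{Ostrowski} applied to each member of the list yield a uniform degree threshold, and only then is the appearance of $-1-\sqrt2$ in the hypothesis justified. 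This finiteness-and-boundedness statement (the $-2$ analogue of Theorem \ref{Vi2} and Proposition \ref{forbiddenhoff}) is the actual heart of Hoffman's theorem, and it is exactly the part your sketch defers. Until (1) and (2) are supplied, the argument is an accurate roadmap but not a proof.
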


In this paper, we are going to generalize Theorem \ref{hfthm} for graphs with smallest eigenvalue at least $-3$. In order to state our main result, we need to introduce the following terminology. Let $G$ be a graph with smallest eigenvalue $\lambda_{\min}$. Define the positive semi-definite matrix $B$ by $A - \lfloor\lambda_{\min} \rfloor I$, where $A$ is the adjacency matrix of $G$. For a positive integer $s$, we say that the graph $G$ is \emph{$s$-integrable} if there exists a matrix $N$ such that the equality $B = N^TN$ holds and the matrix $\sqrt{s}N$ has only integral entries.

Our main result is as follows.
\begin{theorem}\label{mainthm}
There exists a positive integer $K$ such that if a connected graph $G$ has smallest eigenvalue at least $-3$ and minimal degree at least $K$, then $G$ is $2$-integrable.
\end{theorem}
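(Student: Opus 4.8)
The plan is to reduce the problem, via the theory of Hoffman graphs, to a finite lattice computation that the large‑minimal‑degree hypothesis makes possible.

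\emph{Step 1: reduce to $\lambda_{\min}(G)\in[-3,-2)$.} If $\lambda_{\min}(G)\ge -2$, then by Theorem~\ref{cameron} either $G$ is a generalized line graph or $|V(G)|\le 36$; the second case is impossible once $K\ge 36$, since it would force minimal degree at most $35$. A generalized line graph $L$ satisfies $A(L)+2I=C^{T}C$ for an integer matrix $C$, so it is $1$-integrable, hence $2$-integrable (the only floor values that occur are $\lfloor\lambda_{\min}\rfloor=-2$, giving $B=A+2I=C^{T}C$, and $\lfloor\lambda_{\min}\rfloor\ge -1$, which forces $G$ to be complete with $B=A+I=\mathbf 1\mathbf 1^{T}$). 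So we may assume $\lambda_{\min}(G)\in[-3,-2)$; then $B=A(G)+3I$ is positive semi-definite, and $2$-integrability means precisely that the lattice generated by a Gram realization of $2B$ embeds isometrically into some $\zz^{n}$ — equivalently, $V(G)$ maps into $\zz^{n}$ with each vertex a vector of squared norm $3$, adjacent pairs of inner product $1$, and non-adjacent pairs orthogonal.

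\emph{Step 2: local structure.} Using eigenvalue interlacing, minimal degree at least $K$ together with $\lambda_{\min}(G)\ge -3$ forces every local graph of $G$ to decompose into a bounded number of large quasi-cliques with bounded defect; packaging the overlaps in the language of Hoffman graphs, $G$ becomes the slim graph of a line Hoffman graph built from a \emph{finite} family $\mathfrak H$ of irreducible Hoffman graphs, each with smallest eigenvalue at least $-3$ and with bounded number of slim vertices and bounded fatness. Proving that $\mathfrak H$ can be taken finite — that once $K$ is large, arbitrarily large irreducible Hoffman graphs with smallest eigenvalue $\ge -3$ cannot arise as building blocks — is the first main obstacle; it is a Ramsey‑type argument combining interlacing with the combinatorics of how fat vertices attach to slim ones.

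\emph{Step 3: $2$-integrability propagates.} The lattice $\Lambda(G)$ attached to $B$ is an amalgam of the lattices $\Lambda(\mathfrak h)$, $\mathfrak h\in\mathfrak H$, glued along their fat vertices inside an ambient orthogonal sum. When $K$ is large the quasi-clique decomposition is ``tight'', so no new short vectors appear in the amalgam, and hence $\sqrt 2\,\Lambda(G)$ embeds in $\zz^{n}$ as soon as each $\sqrt 2\,\Lambda(\mathfrak h)$ does. This reduces Theorem~\ref{mainthm} to a finite statement: every Hoffman graph in $\mathfrak H$ is $2$-integrable.

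\emph{Step 4: the finite check.} One then classifies the irreducible Hoffman graphs with smallest eigenvalue at least $-3$ and computes their lattices, in analogy with the role of the root lattices $A_{n},D_{n},E_{6},E_{7},E_{8}$ in the $\lambda_{\min}\ge -2$ theory. This is the second, and perhaps heavier, obstacle, and it is where the degree bound really earns its keep: unlike norm‑$2$ vectors, vectors of squared norm $3$ can generate lattices that are not even integral, let alone embeddable in $\zz^{n}$, and the obstruction to $2$-integrability is concentrated in finitely many small local configurations; one must verify that these are exactly the ones excluded by the bound in Step~2, and that every remaining local lattice, together with every amalgam of such, does embed in a standard lattice after scaling by $\sqrt 2$. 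The reductions in Steps~1 and~3 are comparatively routine; the real work lies in the finiteness statement of Step~2 and the case‑by‑case lattice verification of Step~4.
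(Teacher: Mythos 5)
Your Steps 1 and 2 track the paper: the reduction via Theorem~\ref{cameron} to the case $-3\le\lambda_{\min}(G)<-2$, and the Ramsey/interlacing argument showing that for large $K$ the graph $G$ is the slim graph of a \emph{fat} Hoffman graph $\mathfrak{h}$ with $\lambda_{\min}(\mathfrak{h})\ge-3$ (the paper's Theorem~\ref{main2}, resting on the finiteness of minimal forbidden fat Hoffman graphs for $\Go_3$). But Steps 3 and 4 contain a genuine gap: you defer the actual $2$-integrability to a ``case-by-case lattice verification'' of the lattices generated by norm-$3$ vectors coming from Hoffman graphs with smallest eigenvalue at least $-3$, together with an amalgamation argument that ``no new short vectors appear.'' Neither of these is carried out, and as stated the finite check is not finite: there is no classification of such lattices available (indeed, whether every irreducible integral lattice generated by norm-$3$ vectors is $\sigma$-integrable for a uniform $\sigma$ is posed as an open conjecture at the end of the paper), and the graph $H$ built on $\overline{McL}$ in Remark~\ref{rem1}(iv) shows that norm-$3$ lattices which are not even $2$-integrable genuinely occur at smallest eigenvalue $-3$.

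The idea you are missing is that fatness kills the norm-$3$ problem entirely. Once $G$ is the slim graph of a fat Hoffman graph $\mathfrak{h}$ with $\lambda_{\min}(\mathfrak{h})\ge-3$, Lemma~\ref{twolattices} and Corollary~\ref{graphandhoffman} reduce the claim to the $2$-integrability of the \emph{reduced} lattice $\Lambda^{red}(\mathfrak{h},3)$, whose Gram matrix is $Sp(\mathfrak{h})+3I$. By Lemma~\ref{fatnbr} every slim vertex has a fat neighbor, so every diagonal entry of $Sp(\mathfrak{h})+3I$ is at most $2$: the reduced lattice is generated by vectors of norm $1$ or $2$, hence is a direct sum of a standard lattice and irreducible root lattices. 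Witt's classification (Theorem~\ref{root}) together with the $1$-integrability of $A_n,D_n$ and the explicit $2$-integrable realizations of $E_6,E_7,E_8$ (Remark~\ref{root2}) then gives $2$-integrability in one stroke. This also makes your Step 3 (propagating integrability through an amalgam of local lattices) unnecessary, since the argument is applied to the single reduced lattice of the whole associated Hoffman graph rather than to building blocks. Your Step 2 is also stronger than what is needed or proved: the paper does not decompose $\mathfrak{h}$ into a finite family of irreducible Hoffman graphs; it only needs $\mathfrak{h}$ to be fat with smallest eigenvalue at least $-3$.
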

The above result partially solves a problem in \cite[p111]{drg}.

\begin{remark}\label{rem1}
\begin{enumerate}
\item Theorem \ref{mainthm} can be seen as a generalization of Theorem \ref{hfthm}, as the generalized line graphs are exactly the 1-integrable graphs with smallest eigenvalue at least $-2$.

\item  As part of the proof of Theorem \ref{mainthm}, we can show that there exists a real number $\varepsilon<-3$, such that for any real number $\lambda\in(\varepsilon,-3]$, there exists a constant $f(\lambda)$, such that if a graph $G$ has smallest eigenvalue at least $\lambda$ and minimal degree at least $f(\lambda)$, then the smallest eigenvalue of $G$ is at least $-3$.

\item In Example \ref{E6}, we will give a family of non $1$-integrable graphs with smallest eigenvalue at least $-3$ and unbounded minimal degree. This shows that we cannot improve the result in Theorem \ref{mainthm} from $2$-integrability to $1$-integrability.

\item Denote by $\overline{McL}$ the complement of the McLaughlin graph. Note that $\overline{McL}$ is the unique strongly regular graph with parameters $(275, 162, 105, 81)$, as the McLaughlin graph is uniquely determined by its parameters (See \cite{Mclaughlin}). In \cite{KRY2}, Koolen et al. showed that $\overline{McL}$ is not $2$-integrable. Let $H$ be the $3$-point cone over $\overline{McL}$, that is, $H$ is obtained by adding $3$ new vertices to $\overline{McL}$ and joining these three vertices to all the vertices in $\overline{McL}$ and to each other. Note that the graph $H$ has minimal degree $165$, smallest eigenvalue $-3$ and is not $2$-integrable, as $\overline{McL}$ is not $2$-integrable. This shows that the constant $K$ in Theorem \ref{mainthm} is at least $166$.  The graph $H$ was found by Koolen and Munemasa and they also showed that this is a maximal graph with smallest eigenvalue at least $-3$, that is, there does not exist a connected graph with smallest eigenvalue at least $-3$ which contains $H$ as a proper induced subgraph.

\item There exist families of connected non $2$-integrable graphs with smallest eigenvalue at least $-3$ and unbounded number of vertices, as shown by Koolen and Munemasa. For example, let $H^\prime$ be the cone over $\overline{McL}$ and join the complete graph $K_m$ to the cone vertex of $H^\prime$ to obtain the connected graph $K(m)$. Then the connected graph $K(m)$ with $m+276$ vertices has smallest eigenvalue $-3$ and is not $2$-integrable.

\begin{figure}[h]
\begin{center}
\includegraphics[scale=1]{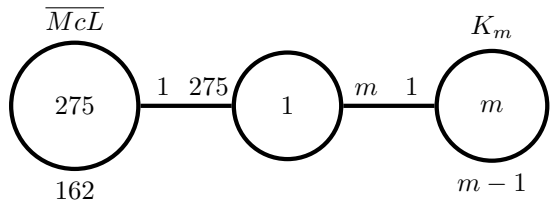}
\end{center}
  \caption{The graph $K(m)$}
\end{figure}
\end{enumerate}
\end{remark}

Now we look at integral lattices with minimal vectors of norm $3$.

Let $\Lambda$ be an (finite-dimensional) integral lattice whose minimal vectors have norm $3$, and let $\Lambda^\prime$ be the sublattice generated by the minimal vectors of $\Lambda$. We denote by $U$ the set of the minimal vectors of $\Lambda$, that is, $U=\{\mathbf{u}\in\Lambda\mid\| \mathbf{u}\|^2=3\}$. Assume that $U^\prime$ is a maximal subset of $U$ such that for any two distinct vectors $\mathbf{u}_1$ and $\mathbf{u}_2$ in $U^\prime$, $\mathbf{u}_1\neq -\mathbf{u}_2$, and $L$ is the Gram matrix of $U^\prime$. Since any two distinct vectors $\mathbf{u}_1$ and $\mathbf{u}_2$ in $U^\prime$ have inner product $0,1$ or $-1$, we find that $L=3I+A$, where $A$ is the adjacency matrix of some signed graph $\widehat{G}$. This concludes that $A$ (and $\widehat{G}$) has smallest eigenvalue at least $-3$. Therefore, in order to understand lattices whose minimal vectors have norm $3$, we need to understand signed graphs with smallest eigenvalue at least $-3$. In this paper, we will look at connected graphs with smallest eigenvalue at least $-3$. We think that graphs and signed graphs behave very similar with respect to the smallest eigenvalue.

Related work in terms of line systems with angles $\alpha$ satisfying as $\alpha\in\{0,\pm\frac{1}{3}\}$ has been  done by Shult and Yanushka \cite{Ernest}. Bachoc, Nebe and Venkov \cite{odd unimodular lattices} studied lattices with minimal vectors of norm $t$, where $t$ is an odd integer.

To show our main result, we need the theory of Hoffman graphs as introduced by Woo and Neumaier \cite{Woo}. In Section \ref{preliminary}, we will give a collection of known results on Hoffman graphs which will be used to prove our main result.

\section{Definitions and preliminaries}\label{preliminary}

\subsection{Lattices and graphs}

A \emph{lattice} $\Lambda$ in $\mathbb{R}^n$ is a set of vectors in $\mathbb{R}^n$ which is closed under addition and subtraction. We say that a lattice is generated by a set $X$ of $\mathbb{R}^n$ if $\Lambda = \{
\sum_{\mathbf{x} \in X} \alpha_\mathbf{x} \mathbf{x} \mid \alpha_\mathbf{x} \in \mathbb{Z}$ for all $\mathbf{x} \in X\}$, and we write $\Lambda = \langle X \rangle$. A lattice $\Lambda$ is called \emph{integral} if the inner product of any two vectors in $\Lambda$ is integral. The minimal vectors of an integral lattice are the vectors with minimal positive norm. We say an integral lattice has minimal norm $t$ if its minimal vectors have norm $t$.  An integral lattice is called {\it irreducible} if it is not a direct sum of proper sublattices. An integral lattice $\Lambda$ is called a \emph{root lattice} if $\Lambda$ is generated by a set $X$ of vectors of norm $2$, i.e. $\langle \mathbf{x},\mathbf{x} \rangle=2$ for all $\mathbf{x} \in X$. The vectors of norm $2$ in a root lattice $\Lambda$ are called the {\it roots} of $\Lambda$.

\begin{theorem}\label{root} (\cite{Witt}) Every irreducible root lattice is isomorphic to one of the following:

\begin{enumerate}

\item $A_n = \{ \mathbf{x} \in \mathbb{Z}^{n+1}\mid \sum \mathbf{x}_i = 0\}$ for $n \geq 1$,

\item $D_n = \{ \mathbf{x} \in \mathbb{Z}^{n}\mid \sum \mathbf{x}_i ~\text{is even}\}$ for $n \geq 4$,

\item $E_8 = D_8 \cup (\mathbf{c}+D_8)$, where $\mathbf{c}= \frac{1}{2}(1,1,1,1,1,1,1,1)$,

\item $E_7 = \{ \mathbf{x} \in E_8 \mid \sum \mathbf{x}_i = 0 \}$,

\item $E_6 = \{ \mathbf{x} \in E_7 \mid \mathbf{x}_7 + \mathbf{x}_8 = 0\}$.

\end{enumerate}

\end{theorem}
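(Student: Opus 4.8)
The plan is to follow the classical route via simple roots and Dynkin diagrams. First I would record the elementary inner-product facts for a root lattice $\Lambda$: if $\alpha,\beta$ are roots with $\alpha\neq\pm\beta$, then $\langle\alpha,\beta\rangle\in\{-1,0,1\}$, since $|\langle\alpha,\beta\rangle|\le\|\alpha\|\|\beta\|=2$ by Cauchy--Schwarz with equality forcing $\beta=\pm\alpha$, and integrality of $\Lambda$ then rules out $\pm 2$. Moreover $\langle\alpha,\beta\rangle=1$ implies $\alpha-\beta$ is a root (it has norm $2$), and $\langle\alpha,\beta\rangle=-1$ implies $\alpha+\beta$ is a root. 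Next, choosing a vector $t$ not orthogonal to any root, I split the root set $R$ into $R^{+}$ and $R^{-}=-R^{+}$ by the sign of $\langle t,\cdot\rangle$, and let $\Delta=\{\alpha_1,\dots,\alpha_\ell\}$ be the \emph{simple} roots, i.e.\ those positive roots not expressible as a sum of two positive roots. A standard induction on $\langle t,\cdot\rangle$ using the facts above shows every positive root is a non-negative integral combination of $\Delta$; hence $\Lambda=\langle R\rangle=\langle\Delta\rangle$. The same facts give $\langle\alpha_i,\alpha_j\rangle\le 0$ for $i\neq j$ (otherwise $\alpha_i-\alpha_j$ or $\alpha_j-\alpha_i$ would be a positive root making $\alpha_i$ or $\alpha_j$ a sum of positives), and together with the existence of $t$ this forces $\Delta$ to be linearly independent, so $\ell=\rank\Lambda=:n$.

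Then I would attach to $\Delta$ the simple graph $\Gamma$ (the Dynkin diagram) with vertex set $\Delta$ and an edge joining $\alpha_i,\alpha_j$ exactly when $\langle\alpha_i,\alpha_j\rangle=-1$. The Gram matrix of $\Delta$ is $M=2I-A(\Gamma)$, and $M$ is positive definite because $\Delta$ is a basis of $\Lambda\otimes\mathbb{R}=\mathbb{R}^n$ with its ambient Euclidean inner product. If $\Gamma$ were disconnected, say with vertex sets $\Delta_1\sqcup\Delta_2$, the spans $\langle\Delta_1\rangle$ and $\langle\Delta_2\rangle$ would be mutually orthogonal; since every root is $\pm$ a non-negative combination of $\Delta$ while any sum of a nonzero vector from each orthogonal piece has norm $\ge 4>2$, each root lies in $\langle\Delta_1\rangle$ or in $\langle\Delta_2\rangle$, contradicting irreducibility of $\Lambda$. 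Hence $\Gamma$ is connected.

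The heart of the argument is the classification of connected simple graphs $\Gamma$ with $2I-A(\Gamma)$ positive definite, equivalently with spectral radius of $A(\Gamma)$ strictly less than $2$. For this I would invoke (or reprove) Smith's theorem: each extended diagram $\widetilde{A}_m$ (a cycle), $\widetilde{D}_m$, $\widetilde{E}_6$, $\widetilde{E}_7$, $\widetilde{E}_8$ admits a strictly positive vector killed by $2I-A$, so on these $2I-A$ is positive semidefinite but singular; since deleting a vertex or an edge from a connected graph strictly decreases the spectral radius, any connected $\Gamma$ containing an extended diagram has spectral radius $\ge 2$ and is excluded. A short combinatorial check — every vertex has degree $\le 3$, at most one vertex has degree $3$, and the legs at a branch vertex have lengths $(a,b,c)$ with $\tfrac1a+\tfrac1b+\tfrac1c>1$ — then shows that a connected $\Gamma$ avoiding all extended diagrams is precisely one of $A_n$, $D_n$ $(n\ge 4)$, $E_6$, $E_7$, $E_8$.

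Finally, for each surviving diagram I would exhibit the explicit coordinate model of $\Delta$: for $A_n$, $\alpha_i=\mathbf{e}_i-\mathbf{e}_{i+1}$ in $\mathbb{Z}^{n+1}$; for $D_n$, also $\alpha_n=\mathbf{e}_{n-1}+\mathbf{e}_n$; for $E_8$, the standard base consisting of $\mathbf{e}_{i+1}-\mathbf{e}_i$, $\mathbf{e}_1+\mathbf{e}_2$, and $\tfrac12(\mathbf{e}_1-\mathbf{e}_2-\cdots-\mathbf{e}_7+\mathbf{e}_8)$; and $E_7$, $E_6$ as the stated hyperplane sections of $E_8$. One checks these vectors have the Gram matrix $2I-A(\Gamma)$ and that their $\mathbb{Z}$-span equals the set displayed in (i)--(v); since the Gram matrix determines an integral lattice up to isometry, this identifies $\Lambda\cong\langle\Delta\rangle$ with the named root lattice. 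I expect the graph-theoretic classification in the previous paragraph to be the main obstacle; the remaining steps are bookkeeping with the inner-product facts from the first paragraph.
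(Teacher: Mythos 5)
The paper does not prove this statement at all: it is quoted as a known classification and attributed to Witt's 1941 paper, so there is no internal proof to compare against. Your outline is the standard (and correct) route to that classical result --- reduce to a simply-laced root system via the inner-product constraints on norm-$2$ vectors, pass to simple roots and the Dynkin diagram, classify connected graphs with $2I-A(\Gamma)$ positive definite by excluding the extended diagrams $\widetilde{A}_m,\widetilde{D}_m,\widetilde{E}_6,\widetilde{E}_7,\widetilde{E}_8$, and then match coordinate models --- and it is essentially the argument found in the references the paper itself uses for lattices (e.g.\ Ebeling's book). One small point worth tightening: in your connectedness step you assert that a root $\alpha=v_1+v_2$ with nonzero orthogonal components would have norm at least $4$; this tacitly uses that every nonzero vector of a root lattice has norm at least $2$, which is true but needs its own one-line argument (a shortest expression of a vector as a sum of roots cannot have two summands with negative inner product, since such a pair could be merged into a single root). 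Alternatively, the cleaner standard fact is that the support of any root in the simple-root basis is connected in the Dynkin diagram. With that repaired, the sketch is complete in outline, with the Smith-type classification of subquadratic graphs carrying the real weight, exactly as you anticipate.
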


For a positive integer $s$, an integral lattice $\Lambda \subset \mathbb{R}^n$ is called {\it $s$-integrable} if $\sqrt{s}\Lambda$ can be embedded in the standard lattice $\mathbb{Z}^k$ for some $k \geq n$. In other words, an integral lattice $\Lambda$ is $s$-integrable if and only if every vector in $\Lambda$ can be described by the form $\frac{1}{\sqrt{s}}(x_1, \dots, x_k)$ with all $x_i \in \mathbb{Z}$ in $\mathbb{R}^k$ for some $k \geq n$ simultaneously.

\begin{remark}\label{root2}
In Theorem \ref{root}, $A_n$ and $D_n$ are $1$-integrable. However, $E_6$, $E_7$, and $E_8$ are not $1$-integrable \cite[Theorem 2]{low dimension V}. To show that they are $2$-integrable, we will consider another representation \cite{code} for them.
Define $8$ vectors ${\mathbf{u}^i}$ for $i=1, \dots, 8$ as follows:

$$\mathbf{u}^1 = \frac{1}{\sqrt{2}}(0,1,1,0,1,0,0,1),~~~~~~$$
$$\mathbf{u}^2 = \frac{1}{\sqrt{2}}(0,-1,0,1,-1,1,0,0),~~$$
$$\mathbf{u}^3 = \frac{1}{\sqrt{2}}(0,0,-1,0,1,-1,1,0),~~$$
$$\mathbf{u}^4 = \frac{1}{\sqrt{2}}(1,0,0,-1,0,1,-1,0),~~$$
$$\mathbf{u}^5 = \frac{1}{\sqrt{2}}(-1,1,0,0,-1,0,1,0),~~$$
$$\mathbf{u}^6 = \frac{1}{\sqrt{2}}(1,-1,1,0,0,-1,0,0),~~$$
$$\mathbf{u}^7 = \frac{1}{\sqrt{2}}(0,1,-1,1,0,0,-1,0),~~$$
$$\mathbf{u}^8 = \frac{1}{\sqrt{2}}(-1,-1,0,0,1,0,-1,0).$$

Then, it is known that
$$E_8 \simeq \langle ~\mathbf{u}^i \mid i = 1,\dots, 8~ \rangle,$$
$$E_7 \simeq \langle ~\mathbf{u}^i \mid i = 2,\dots, 8~ \rangle,$$
$$E_6 \simeq \langle ~\mathbf{u}^i \mid i = 3,\dots, 8~ \rangle.$$

Hence, $E_6$, $E_7$, and $E_8$ are $2$-integrable.

\end{remark}

Let $G$ be a connected graph with adjacency matrix $A(G)$ and smallest eigenvalue $\lambda_{\min}(G)$. For $t = -\lfloor \lambda_{\min} (G) \rfloor$, the matrix $B = A+tI$ is positive semidefinite, and  hence there exists a matrix $N$ such that $B = N^T N$. Let $\Lambda(G)$ be the integral lattice generated by the columns of the matrix $N$. Then we find that $\Lambda(G)$ is an integral lattice generated by vectors of norm $t$. Note that the graph $G$ is $s$-integrable if and only if the integral lattice $\Lambda(G)$ is an $s$-integrable lattice.

\subsection{Hoffman graphs}
Now we give definitions and preliminaries of Hoffman graphs. For more details, see \cite{HJAT}, \cite{KYY} and \cite{Woo}.
\begin{definition} A Hoffman graph $\mathfrak{h}$ is a pair $(H,\ell)$ where $H=(V,E)$ is a graph and $\ell:V \rightarrow \{f,s\}$ is a labeling map satisfying the following conditions:
\begin{enumerate}
\item Every vertex with label $f$ is adjacent to at least one vertex with label $s$;
\item Vertices with label $f$ are pairwise non-adjacent.
\end{enumerate}
\end{definition}
We call a vertex with label $s$ a \emph{slim vertex}, and a vertex with label $f$ a \emph{fat vertex}. We denote
by $V_s(\mathfrak{h})$ (resp. $V_f(\mathfrak{h})$) the set of slim (resp. fat) vertices of $\mathfrak{h}$.

\vspace{0.1cm}
For a vertex $x$ of $\mathfrak{h}$, we define $N_{\mathfrak{h}}^{s}(x)$ (resp. $N_{\mathfrak{h}}^{f}(x)$) the set of slim (resp. fat) neighbors of $x$ in $\mathfrak{h}$.
If every slim vertex of the Hoffman graph $\mathfrak{h}$ has a fat neighbor, then we call $\mathfrak{h}$ \emph{fat}. %In a similar fashion, we define $N^{f}_\mathfrak{h}(x_1,x_2)$ to be the set of common fat neighbors of two slim vertices $x_1$ and $x_2$ in $\mathfrak{h}$ and $N^{s}_\mathfrak{h}(f_1,f_2)$ to be the set of common slim neighbors of two fat vertices $f_1$ and $f_2$ in $\mathfrak{h}$.

\vspace{0.1cm}
The \emph{slim graph} of the Hoffman graph $\mathfrak{h}$ is the subgraph of $H$ induced on $V_s(\mathfrak{h})$. Note that any graph can be considered as a Hoffman graph with only slim vertices, and vice versa. We will not distinguish between Hoffman graphs with only slim vertices and graphs.

\vspace{0.1cm}
A Hoffman graph $\mathfrak{h}_1= (H_1, \ell_1)$ is called an \emph{(proper) induced Hoffman subgraph} of $\mathfrak{h}=(H, \ell)$, if $H_1$ is an (proper) induced subgraph of $H$ and $\ell_1(x) = \ell(x)$ holds for all vertices $x$ of $H_1$.

\vspace{0.1cm}
Let $W$ be a subset of $V_s(\mathfrak{h})$. An \emph{induced Hoffman subgraph of $\mathfrak{h}$ generated by $W$}, denoted by $\langle W\rangle_{\mathfrak{h}}$, is the Hoffman subgraph of $\mathfrak{h}$ induced by $W \cup\{f\in V_f(\mathfrak{h})~|f \sim w \text{ for some }w\in W \}$.

\vspace{0.1cm}

\begin{definition} For a Hoffman graph $\mathfrak{h}=(H,\ell)$, there exists a matrix $C$ such that the adjacency matrix $A$ of $H$ satisfies
\begin{eqnarray*}
A=\left(
\begin{array}{cc}
A_s  & C\\
C^{T}  & O
\end{array}
\right),
\end{eqnarray*}
where $A_s$ is the adjacency matrix of the slim graph of $\mathfrak{h}$. The \emph{special matrix} $Sp(\mathfrak{h})$ of $\mathfrak{h}$ is the real symmetric matrix $Sp(\mathfrak{h}):=A_s-CC^{T}.$ The eigenvalues of $\mathfrak{h}$ are the eigenvalues of $Sp(\mathfrak{h})$, and the smallest eigenvalue of $\mathfrak{h}$ is denoted by  $\lambda_{\min}(\mathfrak{h})$.
\end{definition}

\begin{lemma}(\cite[Lemma 3.4]{Woo})\label{fatnbr}
Let $\ho$ be a Hoffman graph and let $x_i$ and $x_j$ be two distinct slim vertices of $\ho$. The special matrix $Sp(\mathfrak{h})$ has diagonal entries

$$Sp(\mathfrak{h})_{x_i,x_i} = - |N_\ho^f (x_i)|$$
and off-diagonal entries
$$Sp(\mathfrak{h})_{x_i,x_j} = (A_s)_{x_i,x_j} -|N_\ho^f (x_i) \cap N_\ho^f (x_j)|.$$
\end{lemma}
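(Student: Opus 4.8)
The plan is to verify both formulas by a direct computation straight from the definition $Sp(\ho) = A_s - CC^T$, after unpacking the block structure of the adjacency matrix $A$ displayed in the definition. First I would fix the vertex ordering implicit in that block form, with the slim vertices preceding the fat vertices. In this ordering $A_s$ is the $|V_s(\ho)| \times |V_s(\ho)|$ adjacency matrix of the slim graph, while $C$ is the $|V_s(\ho)| \times |V_f(\ho)|$ slim--fat block whose entry $C_{x,f}$ equals $1$ when the slim vertex $x$ is adjacent to the fat vertex $f$ and equals $0$ otherwise. The bottom-right block is zero precisely because fat vertices are pairwise non-adjacent (condition (ii) in the definition of a Hoffman graph), so restricting attention to $A_s$ and $C$ loses no information.

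The heart of the argument is the entrywise evaluation of $CC^T$. For two slim vertices $x_i$ and $x_j$ (not necessarily distinct) the matrix product is
\[
(CC^T)_{x_i,x_j} = \sum_{f \in V_f(\ho)} C_{x_i,f}\, C_{x_j,f}.
\]
Since each factor $C_{x,f}$ is the $0/1$ indicator of the adjacency $x \sim f$, the product $C_{x_i,f}\, C_{x_j,f}$ equals $1$ exactly when $f$ is a fat neighbour of both $x_i$ and $x_j$; hence the sum counts the common fat neighbours, giving $(CC^T)_{x_i,x_j} = |N_\ho^f(x_i) \cap N_\ho^f(x_j)|$. Subtracting this from $A_s$ yields the off-diagonal formula $Sp(\ho)_{x_i,x_j} = (A_s)_{x_i,x_j} - |N_\ho^f(x_i) \cap N_\ho^f(x_j)|$ for $x_i \neq x_j$.

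For the diagonal I would specialize to $x_i = x_j$. Because the entries of $C$ lie in $\{0,1\}$ we have $C_{x_i,f}^2 = C_{x_i,f}$, so $(CC^T)_{x_i,x_i} = \sum_{f} C_{x_i,f} = |N_\ho^f(x_i)|$, the total number of fat neighbours of $x_i$. Using that the slim graph is a simple (loopless) graph, its adjacency matrix $A_s$ has zero diagonal, so $(A_s)_{x_i,x_i} = 0$ and therefore $Sp(\ho)_{x_i,x_i} = -|N_\ho^f(x_i)|$, as claimed.

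There is no genuine obstacle here: the statement is a bookkeeping identity, and the only points needing care are (a) reading off the block $C$ correctly as the slim--fat incidence block, (b) using that $C$ has $0/1$ entries so that squared entries collapse to themselves along the diagonal, and (c) invoking looplessness of the slim graph to annihilate the diagonal of $A_s$. Each of these is immediate from the definitions, so the proof is a short verification rather than an argument with a substantive difficulty.
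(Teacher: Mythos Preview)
Your proof is correct; it is the natural direct verification from the definition $Sp(\ho)=A_s-CC^T$. The paper itself does not prove this lemma but simply cites it from \cite[Lemma 3.4]{Woo}, so there is nothing to compare against beyond noting that your argument is the standard one.
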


For the smallest eigenvalues of Hoffman graphs and their induced Hoffman subgraphs, Woo and Neumaier showed the following inequality.
\begin{lemma}(\cite[Corollary 3.3]{Woo})\label{hoff}
If $\mathfrak{h}_1$ is an induced Hoffman subgraph of a Hoffman graph $\mathfrak{h}$, then $\lambda_{\min}(\mathfrak{h}_1)\geq\lambda_{\min}(\mathfrak{h})$ holds.
\end{lemma}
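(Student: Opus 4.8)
The plan is to reduce the statement to two elementary facts from matrix theory, namely Cauchy's interlacing theorem and the fact that adding a positive semidefinite matrix cannot decrease the smallest eigenvalue. The entrywise formula of Lemma \ref{fatnbr} is exactly the tool needed to see that passing to an induced Hoffman subgraph realizes both of these operations. Accordingly, I would factor the passage from $\mathfrak{h}$ to $\mathfrak{h}_1$ into a deletion of slim vertices and a deletion of fat vertices, and control each separately, working directly at the level of the special matrices rather than constructing an intermediate Hoffman graph.

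Write $S_1 = V_s(\mathfrak{h}_1) \subseteq V_s(\mathfrak{h})$ and $F_1 = V_f(\mathfrak{h}_1) \subseteq V_f(\mathfrak{h})$, and let $M = Sp(\mathfrak{h})$. The first step is to compare $Sp(\mathfrak{h}_1)$ with the principal submatrix $M|_{S_1}$ of $M$ indexed by the surviving slim vertices. Since $\mathfrak{h}_1$ is induced, for slim vertices $x, y \in S_1$ the slim adjacency $(A_s)_{x,y}$ is unchanged, while the fat neighborhoods shrink as $N_{\mathfrak{h}_1}^f(x) = N_{\mathfrak{h}}^f(x) \cap F_1$. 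Feeding this into Lemma \ref{fatnbr} gives, for all $x, y \in S_1$,
$$Sp(\mathfrak{h}_1)_{x,y} - (M|_{S_1})_{x,y} = |N_{\mathfrak{h}}^f(x) \cap N_{\mathfrak{h}}^f(y) \cap (V_f(\mathfrak{h}) \setminus F_1)| = \sum_{g} C_{x,g} C_{y,g},$$
where the sum ranges over the deleted fat vertices $g \in V_f(\mathfrak{h}) \setminus F_1$ and $C$ is the slim--fat incidence matrix. Hence $Sp(\mathfrak{h}_1) = M|_{S_1} + P$, where $P = \sum_{g} c_g c_g^{T}$ is a sum of rank-one positive semidefinite matrices (with $c_g$ the restriction to $S_1$ of the column of $C$ at $g$), and is therefore positive semidefinite.

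The second step combines the two matrix inequalities. Because $P \succeq 0$, the variational characterization $\lambda_{\min}(X) = \min_{\|v\| = 1} v^{T} X v$ yields $\lambda_{\min}(Sp(\mathfrak{h}_1)) = \min_{\|v\|=1}\bigl(v^{T}(M|_{S_1})v + v^{T} P v\bigr) \geq \lambda_{\min}(M|_{S_1})$. Next, $M|_{S_1}$ is a principal submatrix of the symmetric matrix $M = Sp(\mathfrak{h})$, so Cauchy's interlacing theorem gives $\lambda_{\min}(M|_{S_1}) \geq \lambda_{\min}(M)$. Chaining these, $\lambda_{\min}(\mathfrak{h}_1) = \lambda_{\min}(Sp(\mathfrak{h}_1)) \geq \lambda_{\min}(M|_{S_1}) \geq \lambda_{\min}(Sp(\mathfrak{h})) = \lambda_{\min}(\mathfrak{h})$, as desired.

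The only genuinely delicate point, and the step I expect to require the most care, is the sign bookkeeping in the first step: one must verify that deleting fat vertices changes the special matrix by a positive semidefinite (rather than an indefinite) perturbation, which hinges on the minus sign in the $-CC^{T}$ definition of $Sp(\mathfrak{h})$ together with the nonnegativity of the common-fat-neighbor counts in Lemma \ref{fatnbr}. Once the perturbation is correctly identified as $+\sum_{g} c_g c_g^{T}$, the remaining two ingredients are completely standard, so no further obstacles are anticipated.
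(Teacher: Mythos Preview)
Your proof is correct. The paper does not actually give its own proof of this lemma; it simply cites Woo and Neumaier \cite[Corollary 3.3]{Woo}. Your self-contained argument uses precisely the two matrix-theoretic tools (Cauchy interlacing for the deletion of slim vertices, and the Courant--Weyl inequality for the positive semidefinite perturbation arising from deleting fat vertices) that the paper itself later records as Lemma \ref{interlacing}, so the approach is entirely in the spirit of the surrounding material.
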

\subsubsection{Two canonical fat Hoffman graphs}\label{twokindsofhoffman}
Given a graph $H$, there are two canonical fat Hoffman graphs $\mathfrak{p}(H)$ and $\mathfrak{q}(H)$ with $H$ as their slim graphs. The fat Hoffman graph $\mathfrak{p}(H)$ has $H$ as its slim graph and to each slim vertex, a fat vertex is attached such that any two distinct slim vertices do not have a common fat neighbor. The fat Hoffman graph $\mathfrak{q}(H)$ has $H$ as its slim graph and one fat vertex that is attached to each slim vertex. Note that $Sp(\mathfrak{p}(H))=A(H)-I$ and $Sp(\mathfrak{q}(H))=-A(\overline{H})-I$, where $A(H)$ and $A(\overline{H})$ are the adjacency matrices of $H$ and its complement $\overline{H}$, respectively. Then we have
\begin{lemma}\label{smallesteigenvalueoftwohoff}
Let $H$ be a graph with $\overline{H}$ as its complement. Denote by $\lambda_{\min}(H)$ and $\lambda_{\max}(\overline{H})$ the smallest eigenvalue of $H$ and the largest eigenvalue of $\overline{H}$, respectively. Then $\lambda_{\min}(\mathfrak{p}(H))=-1+\lambda_{\min}(H)$ and $\lambda_{\min}(\mathfrak{q}(H))=-1-\lambda_{\max}(\overline{H})$.
\end{lemma}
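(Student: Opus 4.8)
The plan is to reduce everything to the two displayed identities $Sp(\mathfrak{p}(H)) = A(H) - I$ and $Sp(\mathfrak{q}(H)) = -A(\overline{H}) - I$ that precede the statement, and then read off the smallest eigenvalue using the fact that the eigenvalues of a Hoffman graph are, by definition, the eigenvalues of its special matrix. So the first step is simply to justify those two identities via Lemma \ref{fatnbr}. For $\mathfrak{p}(H)$, each slim vertex $x$ has exactly one fat neighbour and distinct slim vertices have no common fat neighbour, so $|N^f(x_i)| = 1$ and $|N^f(x_i) \cap N^f(x_j)| = 0$ for $i \neq j$; Lemma \ref{fatnbr} then gives diagonal entries $-1$ and off-diagonal entries $(A_s)_{x_i,x_j}$, i.e. $Sp(\mathfrak{p}(H)) = A(H) - I$. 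For $\mathfrak{q}(H)$, every slim vertex is adjacent to the single fat vertex, so $|N^f(x_i)| = 1$ and $|N^f(x_i) \cap N^f(x_j)| = 1$ for $i \neq j$; Lemma \ref{fatnbr} gives diagonal entries $-1$ and off-diagonal entries $(A_s)_{x_i,x_j} - 1$, which equals $0$ when $x_i \sim x_j$ in $H$ and $-1$ otherwise, i.e. $-(A(\overline{H}))_{x_i,x_j}$; hence $Sp(\mathfrak{q}(H)) = -A(\overline{H}) - I$.

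The second step is purely linear algebra on symmetric matrices. Subtracting $I$ shifts every eigenvalue by $-1$, so $\lambda_{\min}(\mathfrak{p}(H)) = \lambda_{\min}(A(H)) - 1 = -1 + \lambda_{\min}(H)$. For $\mathfrak{q}(H)$, the eigenvalues of $-A(\overline{H})$ are the negatives of those of $A(\overline{H})$ (as $A(\overline{H})$ is real symmetric), so the smallest eigenvalue of $-A(\overline{H})$ is $-\lambda_{\max}(\overline{H})$; subtracting $I$ then yields $\lambda_{\min}(\mathfrak{q}(H)) = -1 - \lambda_{\max}(\overline{H})$.

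There is essentially no obstacle here: the statement is a direct bookkeeping consequence of the definition of the special matrix together with Lemma \ref{fatnbr}. The only point that needs a moment's care is the sign flip $\lambda_{\min}(-A(\overline{H})) = -\lambda_{\max}(\overline{H})$, and the observation that the fat vertices contribute nothing extra to the spectrum because the eigenvalues of a Hoffman graph are defined to be exactly those of $Sp(\mathfrak{h})$ rather than of its full adjacency matrix.
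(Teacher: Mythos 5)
Your proof is correct and follows the same route as the paper, which simply observes that the claim ``follows immediately from their special matrices'' $Sp(\mathfrak{p}(H))=A(H)-I$ and $Sp(\mathfrak{q}(H))=-A(\overline{H})-I$ stated just before the lemma. You merely make explicit the verification of those identities via Lemma \ref{fatnbr} and the elementary eigenvalue bookkeeping that the paper leaves implicit.
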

\begin{proof}It follows immediately from their special matrices.
\end{proof}

\subsection{The Hoffman-Ostrowski theorem}\label{sectionostrowski}
In this subsection, we introduce a result of Hoffman and Ostrowski. In order to state the result, we need to introduce the following notations.  Suppose $\ho$ is a Hoffman graph and $\{f_1, \dots, f_r\}$ is a subset of $V_f(\mathfrak{h})$. Let $\go^{n_1, \ldots, n_r}(\ho)$ be the Hoffman graph obtained from $\ho$ by replacing the fat vertex $f_i$ by a slim $n_i$-clique $K^{f_i}$, and joining all the neighbors of $f_i$ (in $\ho$) with all the vertices of $K^{f_i}$  for all $i$. We will write $G(\ho,n)$ for the graph $\go^{n_1, \ldots, n_r}(\ho)$, when $V_f(\ho) = \{ f_1, f_2, \ldots, f_r\}$ and $n_1 =n_2 = \dots = n_r = n$. With the above notations, we can now state the result of Hoffman and Ostrowski.  For a proof of it, see \cite[Theorem 2.14]{HJAT}. We will give a closely related result in the next section.

\begin{theorem}\label{Ostrowski} Suppose $\ho$ is a Hoffman graph with fat vertices $f_1, f_2, \dots, f_r $.Then
\begin{center}$\lambda_{\min}(\go^{n_1,\dots, n_r}(\ho))\geq \lambda_{\min}(\ho)$,\end{center}
and
\begin{displaymath}\lim_{n_1,\dots,n_r \rightarrow \infty}\lambda_{\min}(\go^{n_1,\dots, n_r}(\ho))= \lambda_{\min}(\ho).\end{displaymath}
\end{theorem}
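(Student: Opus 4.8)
The plan is to prove the two assertions separately, both by a direct analysis of the special matrices $Sp(\ho)$ and $Sp(\go^{n_1,\dots,n_r}(\ho))$ together with the Rayleigh-quotient characterization of the smallest eigenvalue. First I would set up notation: write $V_s=V_s(\ho)$ for the slim vertices of $\ho$, and for each fat vertex $f_i$ let $K^{f_i}$ be the slim $n_i$-clique that replaces it in $\go:=\go^{n_1,\dots,n_r}(\ho)$. Recall from Lemma \ref{fatnbr} that on the slim vertices of $\ho$ the matrix $Sp(\ho)$ has diagonal entry $-|N_\ho^f(x)|$ and off-diagonal entry $(A_s)_{xy}-|N_\ho^f(x)\cap N_\ho^f(y)|$; I would check that $\go$ is itself a genuine graph (all vertices slim), so $Sp(\go)=A(\go)$. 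The key bookkeeping identity is that attaching a clique $K^{f_i}$ joined completely to $N_\ho(f_i)\cap V_s$ contributes, after we eliminate the clique coordinates, exactly the rank-one term $-\mathbf{1}_{f_i}\mathbf{1}_{f_i}^T$ in the Schur-complement sense, where $\mathbf{1}_{f_i}$ is the indicator of $N_\ho^s(f_i)$; this is the discrete analogue of the fact that $CC^T$ in the definition of $Sp$ is a sum of rank-one pieces, one per fat vertex.

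For the inequality $\lambda_{\min}(\go)\ge\lambda_{\min}(\ho)$, I would take a unit eigenvector $v$ of $Sp(\go)=A(\go)$ for its smallest eigenvalue $\mu:=\lambda_{\min}(\go)$ and average $v$ over each clique: define $w$ on $V_s\cup\{f_1,\dots,f_r\}$ by $w_x=v_x$ for $x\in V_s$ and $w_{f_i}=\tfrac{1}{\sqrt{n_i}}\sum_{u\in K^{f_i}}v_u$ (or use the $n_i\to\infty$-friendly normalization $w_{f_i}=\sum_{u\in K^{f_i}}v_u$ and compensate). A short computation, using that inside $K^{f_i}$ the off-diagonal block to $N_\ho^s(f_i)$ is all-ones, shows that $w^T A(H_\ho) w - w^T (\text{something})w$ relates $v^T A(\go) v=\mu$ to the Rayleigh quotient of $Sp(\ho)$ at $w$; convexity of $x\mapsto x^2$ (equivalently Cauchy–Schwarz: $(\sum_{u\in K^{f_i}}v_u)^2\le n_i\sum_{u\in K^{f_i}}v_u^2$) makes the clique-internal contribution work in the favorable direction, and one concludes $\lambda_{\min}(\ho)\le w^TSp(\ho)w/\|w\|^2\le\mu$. (Alternatively, and perhaps more cleanly, this half already follows from Lemma \ref{hoff} applied to an appropriate Hoffman graph whose slim graph is $\go$ and whose special matrix dominates $Sp(\ho)$ entrywise on $V_s$; I would check which framing is shorter.)

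For the limit statement, the point is that as all $n_i\to\infty$ the clique coordinates become ``infinitely heavy'' and an eigenvector of $A(\go)$ for the bottom eigenvalue must become nearly constant on each $K^{f_i}$. Concretely, I would argue by contradiction: if $\lambda_{\min}(\go^{n_1,\dots,n_r})$ did not tend to $\lambda_{\min}(\ho)$, then along some sequence it stays $\le\lambda_{\min}(\ho)-\delta$; take the corresponding unit eigenvectors $v^{(k)}$, show (using the eigenvalue equation at a clique vertex, where the diagonal of $A(\go)$ is $0$ but a vertex of $K^{f_i}$ sees all $n_i-1$ other clique vertices) that $v^{(k)}$ restricted to each $K^{f_i}$ has variance $O(1/n_i)\to 0$, replace $v^{(k)}$ by its clique-averaged vector $w^{(k)}$ on $V_s\cup\{f_1,\dots,f_r\}$ with the error in the Rayleigh quotient going to $0$, extract a convergent subsequence $w^{(k)}\to w^\ast\neq 0$, and obtain $w^{\ast T}Sp(\ho)w^\ast/\|w^\ast\|^2\le\lambda_{\min}(\ho)-\delta$, contradicting the definition of $\lambda_{\min}(\ho)$. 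Combined with the inequality from the previous paragraph this pins the limit.

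The main obstacle I anticipate is the second part: carefully controlling the ``concentration on cliques'' quantitatively, i.e. showing that the off-constant component of the bottom eigenvector on each $K^{f_i}$ is genuinely $o(1)$ and that this forces the Rayleigh quotients to converge — one has to rule out the bottom eigenvalue being realized (even approximately) by a vector that oscillates within a clique, which is exactly where the growing clique size is used. The first part is comparatively routine Cauchy–Schwarz bookkeeping; the cleanest writeup probably proves the inequality via Lemma \ref{hoff} and spends its energy on the limit, so I would organize the proof that way and refer to \cite[Theorem 2.14]{HJAT} for any computation that is purely mechanical.
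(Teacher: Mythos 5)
Your first half is sound and, modulo packaging, coincides with what the paper actually does: the paper does not prove Theorem \ref{Ostrowski} directly (it cites \cite[Theorem 2.14]{HJAT}) but proves the matrix analogue, Theorem \ref{Hoffman}, and there the inequality $\mu_n\ge\lambda_{\min}(M)$ is obtained by writing $\widehat{M}(n)$ as a block-diagonal matrix plus a positive semidefinite matrix and applying Courant--Weyl --- exactly your observation that, with $v=(v_S,v_K)$, $\sigma_i=\sum_{u\in K^{f_i}}v_u$ and $\tau_i=\sum_{x\sim f_i}v_x$, one has $v^TA v=v_S^T Sp(\ho)v_S+\sum_i(\tau_i+\sigma_i)^2-\|v_K\|^2$, so discarding the squares gives $\lambda_{\min}(\ho)\le\mu$. (Two small caveats: your fallback via Lemma \ref{hoff} does not apply as stated, since $\ho$ is not an induced Hoffman subgraph of the all-slim graph $\go^{n_1,\dots,n_r}(\ho)$, and the natural auxiliary Hoffman graph containing both chains the inequalities the wrong way; also $Sp(\ho)$ is indexed by $V_s(\ho)$ only, so your averaged vector on $V_s\cup\{f_1,\dots,f_r\}$ cannot be plugged into it --- the restriction $v_S$ is what you want.)

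The genuine gap is in the limit. You negate convergence as ``$\mu_n\le\lambda_{\min}(\ho)-\delta$ along a subsequence,'' but part (i) already makes that impossible; the case that actually has to be excluded, $\mu_n\ge\lambda_{\min}(\ho)+\delta$ for all $n$, is never addressed. This is not a repairable sign slip: your entire mechanism (average the bottom eigenvector of $A(G(\ho,n))$ over the cliques and evaluate $Sp(\ho)$ at the result) transfers a test vector \emph{from} $G(\ho,n)$ \emph{to} $\ho$, and such a transfer can only ever produce $\lambda_{\min}(\ho)\le\mu_n+o(1)$, i.e.\ a weaker form of the inequality you already have. An upper bound on $\mu_n$ requires the opposite transfer: a test vector on $G(\ho,n)$ built from data on $\ho$. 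Concretely, take the bottom unit eigenvector $\psi$ of $Sp(\ho)$, set $v_x=\psi_x$ on slim vertices and $v_u=-\tau_i/n_i$ on $K^{f_i}$; then the Rayleigh quotient of $A(G(\ho,n))$ at $v$ equals $\bigl(\lambda_{\min}(\ho)-\sum_i\tau_i^2/n_i\bigr)/\bigl(1+\sum_i\tau_i^2/n_i\bigr)\to\lambda_{\min}(\ho)$, which is what is needed. The paper's Theorem \ref{Hoffman}(ii) accomplishes the same thing non-asymptotically by Gram-matrix surgery: from $\widehat{N}$ with $\widehat{N}^T\widehat{N}=\widehat{M}(n)-\mu_nI$ it builds $N$ with $N^TN=M+(-\mu_n+\epsilon_2^2|\mathcal{I}_2|)I$, giving the quantitative bound $\lambda_{\min}(M)\ge\mu_n-\epsilon_2^2|\mathcal{I}_2|$ with $\epsilon_2^2=\frac{-\mu_n-1}{n-\mu_n-1}\to0$. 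Either of these should replace your contradiction argument.
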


\subsection{Lattices and Hoffman graphs}
Let $\mathfrak{h}=(H,\ell)$ be a Hoffman graph with smallest eigenvalue $\lambda_{\min}(\mathfrak{h})$, and let $t$ be a real number satisfying $\lambda_{\min}(\mathfrak{h})\geq -t$. Then the symmetric matrix $Sp(\mathfrak{h})+tI$ is positive semidefinite. Therefore, there exists a real matrix $N$ such that $Sp(\mathfrak{h})+tI=N^TN$ holds. We denote by $\Lambda^{red}(\frak{h},t)$ the lattice generated by the columns of $N$. Note that the isomorphism class of $\Lambda^{red}(\frak{h},t)$ depends only on $t$ and $Sp(\mathfrak{h})$, and is independent of $N$.

For a real number $p$, we define $L(p)$ to be a diagonal matrix indexed by the vertex set $V_s(\mathfrak{h})\cup V_f(\mathfrak{h})$ such that $L(p)_{x,x}=p$ if $x\in V_s(\mathfrak{h})$ and $L(p)_{x,x}=1$ if $x\in V_f(\mathfrak{h})$. Note that the matrix $A(H)+L(t)$ is also positive semidefinite (see \cite[Theorem 2.8]{HJAT}), where $A(H)$ is the adjacency matrix of $H$.  Hence, there exists a real matrix $\widetilde{N}$ such that $A(H)+L(t)=\widetilde{N}^T\widetilde{N}$ holds. We denote by $\Lambda(\frak{h},t)$ the lattice generated by the columns of $\widetilde{N}$. Note that the isomorphism class of $\Lambda(\frak{h},t)$ depends only on $t$ and $\mathfrak{h}$, and is independent of $\widetilde{N}$.

It is fairly easy to see that the matrices $N$ and $\widetilde{N}$ of above are closely related. Using this relation, we obtain the following lemma. For the convenience of the readers we give a proof.
\begin{lemma}\label{twolattices}
Let $\mathfrak{h}=(H,\ell)$ be a Hoffman graph with smallest eigenvalue $\lambda_{\min}(\mathfrak{h})$, and let $t$ be an integer satisfying $\lambda_{\min}(\mathfrak{h})\geq -t$. The integral lattice $\Lambda^{red}(\frak{h},t)$ is $s$-integrable if and only if the integral lattice $\Lambda(\frak{h},t)$ is $s$-integrable.
\end{lemma}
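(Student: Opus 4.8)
The plan is to exploit the explicit relationship between a matrix $N$ with $N^TN = Sp(\mathfrak{h}) + tI$ and a matrix $\widetilde{N}$ with $\widetilde{N}^T\widetilde{N} = A(H) + L(t)$, and thereby to identify $\Lambda(\mathfrak{h},t)$ as an orthogonal direct sum $\Lambda^{red}(\mathfrak{h},t) \perp \mathbb{Z}^{r}$, where $r = |V_f(\mathfrak{h})|$. First I would write the adjacency matrix of $H$ in the block form used to define $Sp(\mathfrak{h})$, so that, with rows and columns ordered by $V_s(\mathfrak{h})$ then $V_f(\mathfrak{h})$,
$$A(H) + L(t) = \begin{pmatrix} A_s + tI & C \\ C^T & I \end{pmatrix}, \qquad Sp(\mathfrak{h}) + tI = A_s + tI - CC^T .$$
Since $Sp(\mathfrak{h}) + CC^T = A_s$, a direct multiplication verifies the block-triangular factorization
$$\begin{pmatrix} A_s + tI & C \\ C^T & I \end{pmatrix} = \begin{pmatrix} I & C \\ 0 & I \end{pmatrix} \begin{pmatrix} Sp(\mathfrak{h}) + tI & 0 \\ 0 & I \end{pmatrix} \begin{pmatrix} I & 0 \\ C^T & I \end{pmatrix}.$$
Hence, given any $N$ with $N^TN = Sp(\mathfrak{h}) + tI$, the matrix $\widetilde{N} := \begin{pmatrix} N & 0 \\ C^T & I \end{pmatrix}$ satisfies $\widetilde{N}^T\widetilde{N} = A(H) + L(t)$, and since the isomorphism class of $\Lambda(\mathfrak{h},t)$ is independent of the chosen factorization, it may be computed from this particular $\widetilde{N}$.

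Next I would read off the columns of $\widetilde{N}$: the column attached to a fat vertex $f$ is $(\mathbf{0}, \mathbf{e}_f)$, and the column attached to a slim vertex $x$ is $(N_x, c_x)$, where $N_x$ is the $x$-th column of $N$ (so that the $N_x$ generate $\Lambda^{red}(\mathfrak{h},t)$) and $c_x \in \{0,1\}^{V_f(\mathfrak{h})}$ is the indicator vector of the fat neighbours of $x$. Because the vectors $(\mathbf{0}, \mathbf{e}_f)$ already lie in $\Lambda(\mathfrak{h},t)$, we may subtract suitable integer combinations of them from $(N_x, c_x)$ to see that $(N_x, \mathbf{0}) \in \Lambda(\mathfrak{h},t)$. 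Thus $\Lambda(\mathfrak{h},t)$ is generated by the vectors $(N_x, \mathbf{0})$, $x \in V_s(\mathfrak{h})$, together with the orthonormal system $(\mathbf{0}, \mathbf{e}_f)$, $f \in V_f(\mathfrak{h})$, and these two families span mutually orthogonal coordinate subspaces; therefore $\Lambda(\mathfrak{h},t) \cong \Lambda^{red}(\mathfrak{h},t) \perp \mathbb{Z}^{r}$ as integral lattices, with $r = |V_f(\mathfrak{h})|$.

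Finally I would invoke the elementary fact that an orthogonal direct sum $\Lambda_1 \perp \Lambda_2$ of integral lattices is $s$-integrable precisely when both $\Lambda_1$ and $\Lambda_2$ are: one implication follows by taking the orthogonal direct sum of two integral embeddings into standard lattices, and the converse follows because any embedding $\sqrt{s}(\Lambda_1 \perp \Lambda_2) \hookrightarrow \mathbb{Z}^k$ restricts to integral embeddings of $\sqrt{s}\Lambda_1$ and $\sqrt{s}\Lambda_2$. Since $\mathbb{Z}^{r}$ is $1$-integrable, hence $s$-integrable for every $s$, applying this with $\Lambda_1 = \Lambda^{red}(\mathfrak{h},t)$ and $\Lambda_2 = \mathbb{Z}^{r}$ yields exactly the stated equivalence. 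I do not anticipate a serious obstacle here; the only points that need a little care are that $t$ being an integer is what makes $Sp(\mathfrak{h}) + tI$, $A(H) + L(t)$ and the resulting lattices integral (using Lemma \ref{fatnbr} for the entries of $Sp(\mathfrak{h})$), and that the decomposition above is genuinely orthogonal, which is what lets $s$-integrability transfer in both directions.
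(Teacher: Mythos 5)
Your proof is correct, and its computational core coincides with the paper's: the forward direction uses exactly the same block matrix $\widetilde{N}=\begin{pmatrix}N & O\\ C^T & I\end{pmatrix}$ (the paper's $N^\prime$ is precisely your $C^T$, the fat--slim incidence matrix), and your ``subtract the fat columns'' step is the paper's formula $N_x=\widetilde{N}_x-\sum_{f\sim x}\widetilde{N}_f$ in disguise. The one genuine difference is in how the converse is packaged. The paper starts from an \emph{arbitrary} $\widetilde{N}$ with entries in $\frac{1}{\sqrt{s}}\mathbb{Z}$ realizing $A(H)+L(t)$ and explicitly builds an $N$ with entries in $\frac{1}{\sqrt{s}}\mathbb{Z}$ realizing $Sp(\mathfrak{h})+tI$, so it never needs to name the lattice-theoretic structure. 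You instead fix one convenient factorization, read off the structural isomorphism $\Lambda(\mathfrak{h},t)\cong\Lambda^{red}(\mathfrak{h},t)\perp\mathbb{Z}^{r}$, and then invoke two general facts: that $s$-integrability is preserved under orthogonal direct sums, and that it is inherited by sublattices (which handles the converse, since $\Lambda^{red}(\mathfrak{h},t)$ sits inside $\Lambda(\mathfrak{h},t)$ as an orthogonal summand). This buys you a cleaner and slightly stronger statement --- the explicit orthogonal decomposition, which makes the equivalence transparent for every $s$ at once --- at the cost of leaning on the paper's assertion that the isomorphism class of $\Lambda(\mathfrak{h},t)$ is independent of the chosen $\widetilde{N}$, which you correctly flag and which is what licenses computing it from your particular factorization. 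Both arguments are complete; yours is the more structural reading of the same identity.
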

\begin{proof}
Suppose that the lattice $\Lambda^{red}(\frak{h},t)$ is $s$-integrable. It implies that there exists a matrix $N$ such that $Sp(\mathfrak{h})+tI=N^TN$, where each entry of $N$ is in $\frac{1}{\sqrt{s}}\mathbb{Z}$. We denote by $N^\prime$ the fat-slim vertex incidence matrix with respect to the set of fat vertices of $\mathfrak{h}$, that is, $(N^\prime)_{f,x}=\left\{
  \begin{array}{ll}
    1 &\text{ if $x$ is adjacent to $f$}, \\
    0 &\text{ otherwise}, \\
  \end{array}
\right.$
for $x\in V_s(\mathfrak{h})$ and $f\in V_f(\mathfrak{h})$. The matrix $\widetilde{N}:=\begin{pmatrix}N & O\\N^\prime&I\end{pmatrix}$ satisfies the equation $\widetilde{N}^T\widetilde{N}=A(H)+L(t)$ and this shows that the lattice $\Lambda(\frak{h},t)$ is $s$-integrable.

Conversely, suppose that the lattice $\Lambda(\frak{h},t)$ is $s$-integrable. Then there exists a matrix $\widetilde{N}$ such that $A(H)+L(t)=\widetilde{N}^T\widetilde{N}$ holds, where each  entry of $\widetilde{N}$ is in $\frac{1}{\sqrt{s}}\mathbb{Z}$. The matrix $N$ satisfying $N^TN=Sp(\mathfrak{h})+tI$ can be obtained, if for any slim vertex $x$ of $\mathfrak{h}$, we define the column $N_x$ of $N$ indexed by $x$ as follows:
$$N_x:=\widetilde{N}_x-\sum_{\substack{f\sim x\\f\in V_f(\mathfrak{h})}}\widetilde{N}_f,$$
where $\widetilde{N}_x$ and $\widetilde{N}_f$ are the columns of $\widetilde{N}$ indexed by vertices $x$ and $f$, respectively. Note that all entries of $N$ are in $\frac{1}{\sqrt{s}}\mathbb{Z}$ and this shows that the lattice $\Lambda^{red}(\frak{h},t)$ is $s$-integrable.
\end{proof}

The following corollary immediately follows from Lemma \ref{twolattices}.

\begin{corollary}\label{graphandhoffman}
Let $\mathfrak{h}=(H,\ell)$ be a Hoffman graph with smallest eigenvalue $\lambda_{\min}(\mathfrak{h})$, and let $H^\prime$ be the slim graph of $\mathfrak{h}$ with smallest eigenvalue $\lambda_{\min}(H^\prime)$. Suppose $\lfloor\lambda_{\min}(\mathfrak{h})\rfloor=\lfloor\lambda_{\min}(H^\prime)\rfloor$ holds. Then the graph $H^\prime$ is $s$-integrable if the integral lattice $\Lambda(\frak{h},-\lfloor\lambda_{\min}(\mathfrak{h})\rfloor)$ is $s$-integrable.
\end{corollary}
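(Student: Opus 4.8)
The plan is to realize (an isomorphic copy of) the graph lattice $\Lambda(H')$ as a sublattice of $\Lambda(\mathfrak{h},t)$, where $t:=-\lfloor\lambda_{\min}(\mathfrak{h})\rfloor$, and then to use that $s$-integrability is inherited by sublattices. Note first that the hypothesis $\lfloor\lambda_{\min}(\mathfrak{h})\rfloor=\lfloor\lambda_{\min}(H')\rfloor$ gives $t=-\lfloor\lambda_{\min}(H')\rfloor$, so that the positive semidefinite matrix $A(H')+tI$ is exactly the matrix $B=A(H')-\lfloor\lambda_{\min}(H')\rfloor I$ appearing in the definition of $s$-integrability of the graph $H'$; in particular $\Lambda(H')$ is, by definition, the lattice generated by the columns of any $N$ with $N^TN=A(H')+tI$, and recall that the graph $H'$ is $s$-integrable if and only if $\Lambda(H')$ is. Hence it suffices to show that if $\Lambda(\mathfrak{h},t)$ is $s$-integrable then so is $\Lambda(H')$.

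For this I would take the explicit factorisation used in the proof of Lemma \ref{twolattices}: write $A(H)+L(t)=\widetilde N^T\widetilde N$ with $\widetilde N=\begin{pmatrix}N & O\\ N^\prime & I\end{pmatrix}$, where $Sp(\mathfrak{h})+tI=N^TN$ and $N^\prime$ is the fat--slim incidence matrix of $\mathfrak{h}$, so that $\Lambda(\mathfrak{h},t)$ is the lattice generated by the columns of $\widetilde N$. The columns of $\widetilde N$ indexed by the slim vertices of $\mathfrak{h}$ are the columns of $\begin{pmatrix}N\\ N^\prime\end{pmatrix}$, and their Gram matrix is $N^TN+(N^\prime)^TN^\prime=(Sp(\mathfrak{h})+tI)+CC^T$, where $C$ denotes the slim--fat block of $A(H)$. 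Since $Sp(\mathfrak{h})=A_s-CC^T$ by definition of the special matrix and $A_s=A(H')$, this Gram matrix equals $A(H')+tI$. Thus the sublattice $\Lambda'$ of $\Lambda(\mathfrak{h},t)$ generated by the slim-indexed columns of $\widetilde N$ has Gram matrix $A(H')+tI=A(H')-\lfloor\lambda_{\min}(H')\rfloor I$ on these generators, i.e.\ $\Lambda'\cong\Lambda(H')$.

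Finally, if $\Lambda(\mathfrak{h},t)$ is $s$-integrable, then $\sqrt{s}\,\Lambda(\mathfrak{h},t)$ can be embedded in some $\mathbb{Z}^k$; this embedding restricts to an embedding of the sublattice $\sqrt{s}\,\Lambda'$ into $\mathbb{Z}^k$, so $\Lambda'\cong\Lambda(H')$ is $s$-integrable, and therefore $H'$ is $s$-integrable. (If one prefers to lean only on the statement of Lemma \ref{twolattices} rather than its proof, one may phrase the first step as: $\Lambda(\mathfrak{h},t)$ is $s$-integrable iff $\Lambda^{red}(\mathfrak{h},t)$ is, and an integral factorisation $Sp(\mathfrak{h})+tI=N^TN$ with entries in $\frac{1}{\sqrt{s}}\mathbb{Z}$ then produces $\widetilde N$ as above, also with entries in $\frac{1}{\sqrt{s}}\mathbb{Z}$.)

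I do not anticipate a real obstacle; the entire content is the block computation $N^TN+(N^\prime)^TN^\prime=A(H')+tI$ above. The two points that need attention are (a) invoking the floor hypothesis at the right moment, so that $A(H')+tI$ is literally the matrix $B$ associated to the graph $H'$ and hence $\Lambda'$ is $\Lambda(H')$ rather than a rescaled lattice, and (b) noting that $s$-integrability descends to sublattices, which is immediate from the definition via embeddings into $\mathbb{Z}^k$. Connectedness of $H'$ is not used anywhere in the argument.
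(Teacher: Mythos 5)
Your proof is correct and follows essentially the route the paper intends: the paper justifies the corollary as an immediate consequence of Lemma~\ref{twolattices}, and your argument simply unpacks that lemma's construction, observing that the slim-indexed columns of $\widetilde N=\begin{pmatrix}N & O\\ N^\prime & I\end{pmatrix}$ have Gram matrix $A(H^\prime)+tI$ and generate a sublattice of $\Lambda(\mathfrak{h},t)$, to which $s$-integrability descends. Your remarks on where the floor hypothesis enters are also apt.
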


Another result for $s$-integrability of Hoffman graphs is given as follows.

\begin{theorem}\label{equivrepresentation}Let $\mathfrak{h}$ be a fat Hoffman graph with smallest eigenvalue $\lambda_{\min}(\mathfrak{h})$ satisfying the following:
$$-3\leq\lambda_{\min}(\mathfrak{h})<-2.$$
Then the integral lattice $\Lambda^{red}(\frak{h},3)$ is $1$-integrable if and only if there exists a positive constant $D$ such that the graph $G(\mathfrak{h},n)$ is $1$-integrable for all $n\geq D$.
\end{theorem}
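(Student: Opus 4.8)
\medskip
\noindent\emph{Proof plan.}
The plan is to reduce the statement to an assertion about the lattice $\Lambda(\mathfrak{h},3)$, and then prove the two implications by rather different means: the forward one by an explicit ``blow-up'' of a $1$-integrable realization, and the reverse one by locating a norm-$1$ vector (a ``fat generator'') inside each blown-up clique of a $1$-integrable realization of $G(\mathfrak{h},n)$. The reduction itself uses Lemma~\ref{twolattices} with $t=3$ (valid since $\lambda_{\min}(\mathfrak{h})\geq-3$): $\Lambda^{red}(\mathfrak{h},3)$ is $1$-integrable if and only if $\Lambda(\mathfrak{h},3)$ is, and reading off the proof of that lemma, in the affirmative case $\Lambda(\mathfrak{h},3)$ admits an integral Gram realization in which the generators of the fat vertices are distinct standard basis vectors $e_{(1)},\dots,e_{(r)}$ while each slim generator $g_{x_i}$ has norm $3$, has $e_{(k)}$-coordinate equal to $C_{ik}$ (which is $1$ if $x_i\sim f_k$ in $\mathfrak{h}$ and $0$ otherwise), and satisfies $\langle g_{x_i},g_{x_j}\rangle=(A_s)_{ij}$. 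Finally, by Theorem~\ref{Ostrowski} one has $\lambda_{\min}(\mathfrak{h})\leq\lambda_{\min}(G(\mathfrak{h},n))$ and $\lambda_{\min}(G(\mathfrak{h},n))\to\lambda_{\min}(\mathfrak{h})\in[-3,-2)$, so for all sufficiently large $n$ we have $\lfloor\lambda_{\min}(G(\mathfrak{h},n))\rfloor=-3$; for such $n$, ``$G(\mathfrak{h},n)$ is $1$-integrable'' is exactly the assertion that $A(G(\mathfrak{h},n))+3I$ is the Gram matrix of a family of integral vectors, and only such $n$ will be used.

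For the forward implication I would start from the realization of $\Lambda(\mathfrak{h},3)$ above and, for each $k$, replace the coordinate direction $e_{(k)}$ by a fresh block of coordinates $e^{(k)}_0,\dots,e^{(k)}_{2n}$: the vector $g_{x_i}$ is modified only by moving its $e_{(k)}$-entry $C_{ik}$ into position $e^{(k)}_0$, and the $l$-th vertex of the clique $K^{f_k}$ receives $e^{(k)}_0+e^{(k)}_{2l-1}+e^{(k)}_{2l}$. A direct inner-product check then shows these integral vectors realize $A(G(\mathfrak{h},n))+3I$: within a block the clique vectors realize $2I_n+J_n$ (norm $3$, pairwise inner product $1$), a slim vertex adjacent to $f_k$ meets every vertex of $K^{f_k}$ in exactly the coordinate $e^{(k)}_0$, a slim vertex not adjacent to $f_k$ is disjoint from that block, and distinct cliques occupy disjoint blocks. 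Hence $G(\mathfrak{h},n)$ is $1$-integrable for all large $n$; this direction is essentially mechanical.

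For the reverse implication I would fix one large $n$ and an integral Gram realization of $A(G(\mathfrak{h},n))+3I$, which supplies vectors $g_{x_i}$ (slim vertices) and $w_{k,l}$ (clique vertices), all of norm $3$, with $\langle g_{x_i},g_{x_j}\rangle=(A_s)_{ij}$, $\langle g_{x_i},w_{k,l}\rangle=C_{ik}$, $\langle w_{k,l},w_{k,l'}\rangle=1$ for $l\neq l'$, and $\langle w_{k,l},w_{k',l'}\rangle=0$ for $k\neq k'$. The crux is a rigidity claim: there is an absolute constant (about $17$) such that any family of that many integral vectors of norm $3$ with pairwise inner product $1$ share one coordinate with one sign. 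To prove it I would consider $W=\sum_l w_{k,l}$: then $\|W\|^2=n^2+2n$ while $\sum_j|W_j|\leq 3n$ (each such vector has exactly three entries $\pm1$), so some coordinate $j$ has $|W_j|\geq(n+2)/3$, i.e.\ a set $S$ of at least $(n+2)/3$ of the vectors agree on $j$ (say $(w_{k,l})_j=1$ for $l\in S$); writing $w_{k,l}=e_j+b_{k,l}$ for $l\in S$ makes the $b_{k,l}$ pairwise orthogonal of norm $2$, and any $w_{k,l_0}$ with $(w_{k,l_0})_j\neq1$ would satisfy $\langle w_{k,l_0},b_{k,l}\rangle\geq1$ for all $l\in S$, forcing $3=\|w_{k,l_0}\|^2\geq|S|/2\geq(n+2)/6$, impossible for $n\geq17$. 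Applying this to each clique yields a coordinate $j_k$ and sign $\sigma_k$; set $\epsilon_k:=\sigma_k e_{j_k}$ and $b_{k,l}:=w_{k,l}-\epsilon_k$ (pairwise orthogonal, norm $2$, orthogonal to $\epsilon_k$). One checks the $j_k$ are distinct (if $j_k=j_{k'}$ then $\langle w_{k,l},w_{k',l'}\rangle=0$ forces $|\langle b_{k,l},b_{k',l'}\rangle|=1$ for all $l,l'$, whence $2\geq n/2$, impossible), so the $\epsilon_k$ are orthonormal; and for fixed $i,k$ the integer $\langle g_{x_i},b_{k,l}\rangle=C_{ik}-\langle g_{x_i},\epsilon_k\rangle$ is independent of $l$, so $3=\|g_{x_i}\|^2\geq\sum_l\langle g_{x_i},b_{k,l}\rangle^2/2$ forces $\langle g_{x_i},b_{k,l}\rangle=0$, i.e.\ $\langle g_{x_i},\epsilon_k\rangle=C_{ik}$. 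Then $\{g_{x_i}\}\cup\{\epsilon_k\}$ is an integral Gram realization of $A(H)+L(3)$, so $\Lambda(\mathfrak{h},3)$ — and hence $\Lambda^{red}(\mathfrak{h},3)$, by Lemma~\ref{twolattices} — is $1$-integrable.

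I expect the main obstacle to be the rigidity claim of the last paragraph — that a large system of norm-$3$ integral vectors with all pairwise inner products $1$ must sit inside a ``sunflower'' $e_j+(\text{pairwise orthogonal norm-}2\text{ parts})$ — together with the bookkeeping needed to make a single value of $n$ simultaneously large enough for Theorem~\ref{Ostrowski}, for this claim, for the distinctness of the $j_k$, and for the final inner-product forcing.
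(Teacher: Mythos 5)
Your proposal is correct, and its overall architecture matches the paper's: the forward direction is essentially the paper's construction (each clique vertex gets a shared ``fat'' coordinate plus two private coordinates, and each slim vector keeps its fat coordinates), and the reverse direction, like the paper's, hinges on showing that all vectors of a blown-up clique share a single signed coordinate $r(f)$ which the slim neighbors of $f$ also carry, after which one strips that coordinate off. The difference is in how that key rigidity step is executed and packaged. The paper works directly with $\Lambda^{\reduced}(\mathfrak{h},3)$, anchors the common coordinate at a fixed slim neighbor $x_i$ of $f$, and extracts four clique vertices $y^f_{j_1},\dots,y^f_{j_4}$ meeting $\underline{N}_{x_i}$ in a single common position with pairwise support intersections of size one (using $n\geq 20$ and a pigeonhole on supports of size $3$); you instead locate the coordinate intrinsically from the clique via the averaging bound $\|\sum_l w_{k,l}\|^2=n^2+2n$ versus $\sum_j|W_j|\leq 3n$, and then use Bessel's inequality against the orthogonal norm-$2$ parts $b_{k,l}$ three times (to force all clique vectors onto the coordinate, to separate distinct cliques, and to pin down $\langle g_{x_i},\epsilon_k\rangle=C_{ik}$), finally passing through $\Lambda(\mathfrak{h},3)$ and Lemma~\ref{twolattices} rather than editing the matrix $N$ directly. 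Your version handles the adjacent and non-adjacent slim vertices uniformly and makes fully explicit some steps the paper states tersely (``we can find four vertices\dots satisfying''), at the cost of slightly larger threshold bookkeeping; both arguments are sound and prove the same statement.
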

\begin{proof}From Theorem \ref{Ostrowski} we have that both $\lambda_{\min}(G(\mathfrak{h},n))\geq\lambda_{\min}(\mathfrak{h})$ and $\lim\limits_{n \rightarrow \infty}\lambda_{\min}(G(\mathfrak{h},n))=\lambda_{\min}(\mathfrak{h})$ hold. It follows that there exists a constant $D^\prime$ such that $-3\leq\lambda_{\min}(G(\mathfrak{h},n))<-2$ holds for all $n\geq D^\prime$.

Assume $V_s(\mathfrak{h})=\{x_1,\ldots,x_m\}$ and $V(K^f)=\{y_1^f,\ldots, y_n^f\}$ for any $f\in V_f(\mathfrak{h})$, where $K^f$ is the $n$-clique as defined in Subsection \ref{sectionostrowski}.

If the integral lattice $\Lambda^{red}(\frak{h},3)$ is $1$-integrable, then there exists an integral matrix $N^\prime$ such that $Sp(\mathfrak{h})+3I=(N^\prime)^TN^\prime$ holds. Let $D:=D^\prime$ be the constant. In order to show that $G(\mathfrak{h},n)$ is $1$-integrable for all $n\geq D$, it is sufficient to show that there exists an integral matrix $N$ such that $A(G(\mathfrak{h},n))+3I=N^TN$ holds for all $n\geq D$. For any vertices $x_i$ and $y_j^f$ of $G(\mathfrak{h},n)$, we define the columns $N_{x_i}$ and $N_{y_j^f}$ of $N$ indexed by the vertices $x_i$ and $y_j^f$ as follows:
$$N_{x_i}:=N^\prime_{x_i}+\sum\limits_{\substack{f\sim {x_i}\\f\in V_f(\mathfrak{h})}}\mathbf{e}_f,~N_{y^f_j}:=\mathbf{e}_f+\mathbf{e}_{y^f_{j,1}}+\mathbf{e}_{y^f_{j,2}},$$
where $\{\mathbf{e}_f,\mathbf{e}_{y^f_{j,1}},\mathbf{e}_{y^f_{j,2}}\mid f\in V_f(\mathfrak{h}),j=1,\ldots,n\}$ is a set of orthonormal integral vectors which are orthogonal to the matrix $N^\prime$. It is easy to check that the matrix $N$ satisfies $A(G(\mathfrak{h},n))+3I=N^TN$.

Conversely, assume that the graph $G(\mathfrak{h},n)$ is $1$-integrable for all $n\geq D$. Let $n^\prime:=\max\{D,D^\prime,20\}$ be a positive integer. Then there exists an integral matrix $N$ such that $A(G(\mathfrak{h},n^\prime))+3I=N^TN$ holds. For any vertex $x$ of $G(\mathfrak{h},n^\prime)$, we denote by $N_x$ the column of $N$ indexed by $x$, and for the column vector $N_x$, we denote by $\underline{N}_x$ the support of $N_x$, that is, the set $\{j\mid (N_x)_j\neq0\}$. Note that the size of $\underline{N}_x$ is $3$. Now for any vertex $x_i$ of $G(\mathfrak{h},n^\prime)$ and any fat vertex $f$ adjacent to $x_i$ in $\mathfrak{h}$, we have $\underline{N}_{x_i}\cap\underline{N}_{y^f_j}\neq\emptyset$ for $j=1,\ldots,n^\prime$, as $x_i$ and $y_j^f$ are adjacent in $G(\mathfrak{h},n^\prime)$. Moreover, as $n\geq20$, we can find four vertices $y_{j_1}^f,~ y_{j_2}^f,~y_{j_3}^f$ and $y_{j_4}^f$ satisfying the following conditions:
    \begin{itemize}
    \item there exists an integer $r(f)$ such that $r(f)=\cap_{p=1}^4\underline{N}_{y^f_{j_p}}\cap\underline{N}_{x_i}$;
    \item $(N_{x_i})_{r(f)}=(N_{y^f_{j_p}})_{r(f)}\in\{1,-1\}$ for all $1\leq p\leq4$;
    \item $|\underline{N}_{y^f_{j_p}}\cap\underline{N}_{y^f_{j_q}}|=1$, for all $1\leq p<q\leq4$.
    \end{itemize}
     Then, for any $t\neq i$, if $x_t$ is adjacent to $f$ in $\mathfrak{h}$, then we have $r(f)\in\underline{N}_{x_t}$ and $(N_{x_t})_{r(f)}=(N_{x_i})_{r(f)}$, as $x_t$ is adjacent to all of $y_{j_1}^f,~ y_{j_2}^f,~y_{j_3}^f$ and $y_{j_4}^f$ and the size of $\underline{N}_{x_t}$ is $3$. Moreover, if $x_t$ is not adjacent to $f$ in $\mathfrak{h}$, then we have $r(f)\not\in\underline{N}_{x_t}$, as $x_t$ is adjacent to none of $y_{j_1}^f,~ y_{j_2}^f,~y_{j_3}^f$ and $y_{j_4}^f$ and the size of $\underline{N}_{x_t}$ is $3$. Now considering the submatrix of $N$ indexed by the vertices of $V_s(\mathfrak{h})$, we change its $(r(f),x_i)^{\rm{th}}$ position from non-zero to zero for $i=1,\ldots, m$ and obtain a new matrix $N^\prime$. It is easy to check that the integral matrix $N^\prime$ satisfies $Sp(\mathfrak{h})+3I=(N^\prime)^TN^\prime$. This shows that $\Lambda^{red}(\frak{h},3)$ is $1$-integrable.
\end{proof}

Using the fat Hoffman graph $\mathfrak{p}(H)$, as defined in Subsection \ref{twokindsofhoffman}, we obtain the following corollary.

\begin{corollary}\label{twokindsofgraphs}
Assume that $H$ is a non-complete graph with smallest eigenvalue at least $-2$. Then the graph $H$ is $1$-integrable if and only if there exists a constant $D$ such that the graph $G(\mathfrak{p}(H),n)$ is $1$-integrable for all $n\geq D$.
\end{corollary}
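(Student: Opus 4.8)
The plan is to reduce the corollary to Theorem~\ref{equivrepresentation} applied to the fat Hoffman graph $\mathfrak{h}=\mathfrak{p}(H)$. Recall that $\mathfrak{p}(H)$ is fat (each slim vertex carries its own attached fat vertex) and that $Sp(\mathfrak{p}(H))=A(H)-I$. So the first step is to verify the eigenvalue hypothesis of Theorem~\ref{equivrepresentation} for $\mathfrak{p}(H)$, and the second step is to recognize that the lattice $\Lambda^{red}(\mathfrak{p}(H),3)$ is precisely the lattice governing the $1$-integrability of $H$.

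For the first step, Lemma~\ref{smallesteigenvalueoftwohoff} (or directly from $Sp(\mathfrak{p}(H))=A(H)-I$) gives $\lambda_{\min}(\mathfrak{p}(H))=-1+\lambda_{\min}(H)$. The hypothesis $\lambda_{\min}(H)\ge -2$ then yields $\lambda_{\min}(\mathfrak{p}(H))\ge -3$; and since $H$ is a connected non-complete graph we have $\lambda_{\min}(H)<-1$ (recall that a graph has smallest eigenvalue $\ge -1$ if and only if each of its components is complete), so $\lambda_{\min}(\mathfrak{p}(H))<-2$. Hence $-3\le\lambda_{\min}(\mathfrak{p}(H))<-2$, and Theorem~\ref{equivrepresentation} applies verbatim: $\Lambda^{red}(\mathfrak{p}(H),3)$ is $1$-integrable if and only if there exists a constant $D$ such that $G(\mathfrak{p}(H),n)$ is $1$-integrable for all $n\ge D$. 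Note in passing that $\lambda_{\min}(H)\in[-2,-1)$ forces $\lfloor\lambda_{\min}(H)\rfloor=-2$.

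For the second step, I would simply unwind the two definitions. On one hand, $\Lambda^{red}(\mathfrak{p}(H),3)$ is by definition generated by the columns of any matrix $N$ with $N^TN=Sp(\mathfrak{p}(H))+3I=A(H)-I+3I=A(H)+2I$, and it is $1$-integrable exactly when $N$ can be chosen integral. On the other hand, since $\lfloor\lambda_{\min}(H)\rfloor=-2$, the matrix $B$ attached to $H$ in the definition of $s$-integrability equals $A(H)+2I$, so $H$ is $1$-integrable exactly when $A(H)+2I=N^TN$ for some integral $N$. These are literally the same requirement, so ``$\Lambda^{red}(\mathfrak{p}(H),3)$ is $1$-integrable'' $\iff$ ``$H$ is $1$-integrable'', and combining this with the first step finishes the proof.

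I do not expect a genuine obstacle here: all the substance is carried by Theorem~\ref{equivrepresentation}. The only points needing care are (a) using ``non-complete'' (together with connectedness) to secure the \emph{strict} inequality $\lambda_{\min}(H)<-1$, which is exactly what places $\lambda_{\min}(\mathfrak{p}(H))$ strictly below $-2$ as required by Theorem~\ref{equivrepresentation}, and (b) the floor bookkeeping: one must observe $\lfloor\lambda_{\min}(H)\rfloor=-2$ so that the Gram matrices $Sp(\mathfrak{p}(H))+3I$ and $A(H)+2I$ coincide. In particular one should resist quoting Corollary~\ref{graphandhoffman}, whose hypothesis $\lfloor\lambda_{\min}(\mathfrak{h})\rfloor=\lfloor\lambda_{\min}(H^\prime)\rfloor$ fails here ($-3\neq-2$); it is essential that the relevant object is the reduced lattice $\Lambda^{red}(\mathfrak{p}(H),3)$ and not $\Lambda(\mathfrak{p}(H),3)$.
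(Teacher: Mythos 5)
Your proof is correct and follows exactly the paper's route: the paper's own proof is the one-liner ``It follows directly from Theorem~\ref{equivrepresentation} and Lemma~\ref{smallesteigenvalueoftwohoff}'', and you have simply supplied the same details (the eigenvalue bookkeeping $\lambda_{\min}(\mathfrak{p}(H))=-1+\lambda_{\min}(H)$, and the identification of $\Lambda^{red}(\mathfrak{p}(H),3)$ with the Gram lattice $A(H)+2I$ of $H$), including the correct observation that one must not route through Corollary~\ref{graphandhoffman}. The one caveat is that you quietly insert ``connected'' to secure the strict inequality $\lambda_{\min}(H)<-1$; as literally stated the corollary permits a disconnected non-complete $H$ that is a disjoint union of cliques, for which $\lambda_{\min}(H)=-1$ and the hypothesis of Theorem~\ref{equivrepresentation} fails --- a degenerate case that is trivially true anyway and that the paper's own proof glosses over as well.
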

\begin{proof}
It follows directly from Theorem \ref{equivrepresentation} and Lemma \ref{smallesteigenvalueoftwohoff}.
\end{proof}

\begin{example}\label{E6}
Note that the lattice $\Lambda(\widetilde{E}_6)$ is isomorphic to the lattice $E_6$, and hence the graph $\widetilde{E}_6$ is not $1$-integrable. Corollary \ref{twokindsofgraphs} tells us that there exists a positive integer $D$ such that the graph $G(\mathfrak{p}(\widetilde{E}_6),n)$ with smallest eigenvalue at least $-3$ is not $1$-integrable when $n\geq D$. As the minimal degree of $G(\mathfrak{p}(\widetilde{E}_6),n)$ is equal to $n$, we obtain an infinite family of graphs with smallest eigenvalue at least $-3$ and unbounded minimal degree.
\end{example}

\subsection{Associated Hoffman graphs}

In this subsection, we summarize some facts about \emph{associated Hoffman graphs} and \emph{quasi-cliques}, which provide some connection between Hoffman graphs and graphs. For more details, we refer to \cite{HJJ} and \cite{KYY}.

Let $m$ be a positive integer and let $G$ be a graph that does not contain $\widetilde{K}_{2m}$ as an induced subgraph, where $\widetilde{K}_{2m}$ is the graph on $2m+1$ vertices consisting of a complete graph $K_{2m}$ and a vertex $\infty$ which is adjacent to exactly $m$ vertices of the $K_{2m}$. Let $n\geq (m+1)^2 $ be a positive integer. Let $\mathcal{C}(n)$ := $\{C\mid$ $C$ is a maximal clique of $G$ with at least $n$ vertices$\}$. Define the relation $\equiv_n^m$ on $\mathcal{C}(n)$ by $C_1\equiv_n^m C_2$ if each vertex $x\in C_1$ has at most $m-1$ non-neighbors in $C_2$ and each vertex $y\in C_2$ has at most $m-1$ non-neighbors in $C_1$. Note that $\equiv_n^m$ is an equivalence relation.

Let $[C]_n^m$ denote the equivalence class of $\mathcal{C}(n)$ of $G$ under the equivalence relation $\equiv_n^m$ containing the maximal clique $C$ of $\mathcal{C}(n)$. We define the quasi-clique $Q([C]_n^m)$ of $C$ with respect to the pair $(m, n)$, as the induced subgraph of $G$ on the set $\{x\in V(G)\mid$ $x$ has at most $m-1$ non-neighbors in $C\}$.

Let $[C_1]_n^m, \dots, [C_r]_n^m$ be the equivalence classes of maximal cliques under $\equiv_n^m$. The \emph{associated Hoffman graph} $\go=\go(G,m,n)$ is the Hoffman graph satisfying the following conditions:
\begin{enumerate}
\item $V_s(\go) = V(G)$, $V_f(\go)=\{f_1, f_2, \dots, f_r\};$
\item The slim graph of $\go$ equals $G;$
\item For each $i$, the fat vertex $f_i$ is adjacent to exactly all the vertices of $Q([C_i]_n^m)$ for $i=1,2, \dots, r.$
\end{enumerate}

The following result, which is a crucial tool in this paper, is shown in \cite[Proposition 4.1]{HJJ}.

\begin{prop}\label{assohoff}
Let $G$ be a graph and let $m \geq 2, \phi,\sigma,p \geq 1$ be integers.
There exists a positive integer $n = n(m, \phi, \sigma,  p) \geq (m+1)^2$ such that for any integer $q \geq n$ and any Hoffman graph $\ho$ with at most $\phi$ fat vertices and at most $\sigma$ slim vertices, the graph  $G(\ho, p)$ is an induced subgraph of $G$, provided that the graph  $G$ satisfies the following conditions:
\begin{enumerate}
\item The  graph $G$ does not contain  $\widetilde{K}_{2m}$ as an induced subgraph;

\item Its associated Hoffman graph $\go = \mathfrak{g}(G, m, q)$ contains $\ho$ as an induced Hoffman subgraph.

\end{enumerate}
\end{prop}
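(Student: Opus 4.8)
The plan is to build the required copy of $G(\ho,p)$ inside $G$ by leaving the slim vertices of $\ho$ exactly where they are and replacing each fat vertex of $\ho$ by a carefully chosen $p$-clique taken from the corresponding maximal clique of $G$. Write $\go=\mathfrak{g}(G,m,q)$ and $S:=V_s(\ho)\subseteq V(G)$, so $|S|\le\sigma$ and, since $\ho$ is an induced Hoffman subgraph of $\go$ whose slim graph is $G$, the slim graph of $\ho$ is $G[S]$. Let $V_f(\ho)=\{f_1,\dots,f_t\}$ with $t\le\phi$, where $f_i$ is the fat vertex of $\go$ attached to the quasi-clique $Q([C_i]_q^m)$ of a maximal clique $C_i$ of $G$ with $|C_i|\ge q$; then the slim neighbours of $f_i$ in $\ho$ are precisely $S\cap Q([C_i]_q^m)$. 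I will choose, one index at a time, $p$-cliques $K^{f_i}\subseteq C_i$ so that every vertex of $K^{f_i}$ is adjacent in $G$ to each vertex of $S\cap Q([C_i]_q^m)$ and to no vertex of $S\setminus Q([C_i]_q^m)$, so that $K^{f_i}\cap S=\emptyset$, and so that for $i\ne j$ the cliques $K^{f_i}$ and $K^{f_j}$ are disjoint with no edge of $G$ between them. Once this is achieved, $G\!\left[S\cup\bigcup_i K^{f_i}\right]$ is isomorphic to $G(\ho,p)$, by matching each slim vertex with itself and each new $p$-clique of $G(\ho,p)$ with the corresponding $K^{f_i}$.

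The basic tool is a dichotomy forced by the hypothesis that $G$ has no induced $\widetilde{K}_{2m}$: if $C$ is a clique of $G$ with $|C|\ge 2m$, then every vertex $v$ of $G$ has at most $m-1$ neighbours in $C$ or at most $m-1$ non-neighbours in $C$ --- call $v$ \emph{$C$-bad} or \emph{$C$-good} accordingly --- since otherwise $m$ neighbours and $m$ non-neighbours of $v$ inside $C$, together with $v$, induce a $\widetilde{K}_{2m}$. As $|C_i|\ge q\ge(m+1)^2\ge 2m$, this applies to each $C_i$, and $Q([C_i]_q^m)$ is exactly the set of $C_i$-good vertices. Consequently each slim neighbour of $f_i$ misses at most $m-1$ vertices of $C_i$ and each slim non-neighbour of $f_i$ hits at most $m-1$ vertices of $C_i$, so at most $\sigma+2\sigma(m-1)$ vertices of $C_i$ are excluded for $K^{f_i}$ by the requirements involving $S$.

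The main obstacle is to keep $K^{f_i}$ and $K^{f_j}$ non-adjacent for $i\ne j$. Because $f_i\ne f_j$ are distinct fat vertices of $\go$, their classes are distinct, so $C_i\not\equiv_q^m C_j$, and the key claim is that then $|C_i\cap Q([C_j]_q^m)|\le 2m-2$; that is, boundedly few vertices of $C_i$ are $C_j$-good. The proof is a short case analysis. If some $x_0\in C_i$ is $C_j$-bad and $C_i$ had $m$ vertices that are $C_j$-good, pick such a set $W$; then $x_0$ is adjacent to all of $W$, and since $x_0$ has fewer than $m$ neighbours in $C_j$ while $W$ misses fewer than $m^2$ vertices of $C_j$, one finds $m$ vertices of $C_j$ that are simultaneously non-neighbours of $x_0$ and common neighbours of $W$; these together with $W$ form a $2m$-clique on which $x_0$ has exactly $m$ neighbours, an induced $\widetilde{K}_{2m}$, a contradiction. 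The remaining case, some $y_0\in C_j$ being $C_i$-bad, is handled the same way, now using that $y_0$ is adjacent to all of $C_j$; both cases yield the bound $2m-2$.

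Granting the claim, I choose $K^{f_i}\subseteq C_i$, in any fixed order on $i$, avoiding, besides the $\le\sigma+2\sigma(m-1)$ vertices above: (a) the set $\bigcup_{j\ne i}\big(C_i\cap Q([C_j]_q^m)\big)$, of size $\le(\phi-1)(2m-2)$, which forces every vertex of $K^{f_i}$ to be $C_j$-bad for every other $f_j$, hence to have at most $m-1$ neighbours in each $C_j$; (b) for each previously chosen $K^{f_j}$, the vertices of $C_i$ adjacent to some vertex of $K^{f_j}$ --- at most $(m-1)p$ of them, precisely because those vertices of $K^{f_j}$ are themselves $C_i$-bad by (a) applied to $K^{f_j}$; and (c) the at most $\phi p$ previously chosen clique vertices. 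In total at most some $T=T(m,\sigma,\phi,p)$ vertices of $C_i$ are forbidden, so putting $n:=\max\{(m+1)^2,\,T+p\}$ ensures that for every $q\ge n$ the clique $C_i$ still has $p$ admissible vertices, and the greedy choice succeeds. Conditions (a)--(c) together with the conditions on $K^{f_i}$ relative to $S$ make the verification that the constructed set induces $G(\ho,p)$ routine; the only step needing genuine work is the bound $|C_i\cap Q([C_j]_q^m)|\le 2m-2$.
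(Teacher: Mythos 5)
The paper does not prove Proposition \ref{assohoff} itself but imports it from \cite[Proposition 4.1]{HJJ}, and your argument is correct and follows essentially the same route as that reference: fix the slim vertices, greedily extract a $p$-clique from each representative maximal clique $C_i$, and use $\widetilde{K}_{2m}$-freeness both for the good/bad dichotomy and for the key bound $|C_i\cap Q([C_j]_q^m)|\le 2m-2$ when $C_i\not\equiv_q^m C_j$. I checked the compressed case of that bound (where some $y_0\in C_j$ is $C_i$-bad, giving at most $m-1$ good vertices adjacent to $y_0$ and at most $m-1$ non-adjacent ones before a $\widetilde{K}_{2m}$ appears) and it works as you indicate, so the counting and the choice $n=\max\{(m+1)^2,T+p\}$ go through.
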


\section{A limit result}

In this section, we will give a limit result on matrices.  As a first step, we need the following two inequalities which are direct consequences of the interlacing theorem and the Courant-Weyl inequalities.
We denote, for a symmetric real matrix $M$, its smallest eigenvalue by $\lambda_{\min}(M)$.

\begin{lemma}\label{interlacing}
\begin{enumerate}
\item (\cite[Theorem 9.1.1]{Ch&G}) If $M$ is a real symmetric matrix and $M^\prime$ a principal submatrix of $M$, then $\lambda_{\min}(M^\prime)\geq\lambda_{\min}(M)$.
\item (\cite[Theorem 2.8.1]{spectraofgraphs}) Let $M_1$ and $M_2$ be two real symmetric matrices. If $M:= M_1 -M_2$  is positive semidefinite, then $\lambda_{\min}(M_1)\geq\lambda_{\min}(M_2)$.
\end{enumerate}
\end{lemma}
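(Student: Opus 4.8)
The plan is to derive both parts from the min–max (Courant–Fischer) characterization of the smallest eigenvalue of a real symmetric $n\times n$ matrix $M$, namely
\[
\lambda_{\min}(M)=\min_{\substack{v\in\mathbb{R}^n\\ v\neq 0}}\frac{v^TMv}{v^Tv},
\]
with the minimum attained on a corresponding unit eigenvector. Both statements are classical, so the write-up will be short; the only mild subtlety is bookkeeping with the ambient dimensions, and I will flag that as the place where care is needed.

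For part (i), let $M$ be $n\times n$ and let $M'$ be the principal submatrix of $M$ on an index set $S\subseteq\{1,\dots,n\}$ of size $k$. Given any nonzero $w\in\mathbb{R}^k$, let $\hat w\in\mathbb{R}^n$ be its zero-extension (put $w$ in the coordinates indexed by $S$ and $0$ elsewhere). Then $\hat w^TM\hat w=w^TM'w$ and $\hat w^T\hat w=w^Tw$, so the Rayleigh quotient of $w$ with respect to $M'$ equals that of $\hat w$ with respect to $M$. Taking $w$ to be an eigenvector of $M'$ for $\lambda_{\min}(M')$ gives
\[
\lambda_{\min}(M')=\frac{\hat w^TM\hat w}{\hat w^T\hat w}\geq \min_{v\neq 0}\frac{v^TMv}{v^Tv}=\lambda_{\min}(M),
\]
which is the claimed inequality. (This is exactly Cauchy interlacing restricted to the bottom eigenvalue, so alternatively one may just cite the interlacing theorem.)

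For part (ii), suppose $M_1,M_2$ are real symmetric of the same size and $M:=M_1-M_2$ is positive semidefinite, so $v^TMv\geq 0$ for every $v$, i.e. $v^TM_1v\geq v^TM_2v$. Let $v_0$ be a unit eigenvector of $M_1$ with $M_1v_0=\lambda_{\min}(M_1)v_0$. Then
\[
\lambda_{\min}(M_1)=v_0^TM_1v_0\geq v_0^TM_2v_0\geq \min_{v\neq 0}\frac{v^TM_2v}{v^Tv}=\lambda_{\min}(M_2),
\]
as desired. The main (very minor) obstacle in both parts is simply making sure the test vectors are compared in the right space — the zero-extension trick in (i) and the shared ambient space in (ii) — after which each inequality is one line of the Rayleigh-quotient estimate; no genuine difficulty arises, and indeed each statement is quoted from a standard reference in the excerpt.
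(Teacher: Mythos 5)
Your proof is correct: both parts follow cleanly from the Rayleigh-quotient characterization exactly as you argue, with the zero-extension in (i) and the eigenvector of $M_1$ in (ii) doing the work. The paper itself gives no proof of this lemma — it simply cites the standard references — so your write-up is a valid self-contained justification of the standard facts being quoted.
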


Now we give the limit result. The main idea comes from the proof of \cite[Theorem 2.14]{HJAT}.

\begin{theorem}\label{Hoffman}
Let $\mathcal{I}$ be a finite set and $\pi = \{\mathcal{I}_1, \mathcal{I}_2\}$ a partition of $\mathcal{I}$. Assume $M := \bordermatrix{
 ~& \mathcal{I}_1& \mathcal{I}_2 \cr
\mathcal{I}_1& M^{1,1} & M^{1,2}\cr
\mathcal{I}_2& M^{2,1} & M^{2,2}\cr
 }$ is a block matrix indexed by $\mathcal{I}_1\cup\mathcal{I}_2$, which has smallest eigenvalue $\lambda_{\min}(M)$ at most $-1$. For a given non-empty set $\mathcal{I}_3$ disjoint from $\mathcal{I}$, say with $n \geq 1$ elements, define a symmetric block matrix $\widehat{M}(n)$ indexed by $\mathcal{I}\cup\mathcal{I}_3$ as follows:
$$\widehat{M}(n):=\bordermatrix{
 ~& \mathcal{I}_1& \mathcal{I}_2 & \mathcal{I}_3\cr
\mathcal{I}_1& M^{1,1} & M^{1,2}&O\cr
\mathcal{I}_2& M^{2,1} & M^{2,2}+J&J\cr
\mathcal{I}_3& O & J&J-I
 },$$
where $O$, $J$ and $I$ are the zero matrix, the all-ones matrix and the identity matrix, respectively. Denote by $\mu_n$ the smallest eigenvalue of $\widehat{M}(n)$. Then the following holds:
\begin{enumerate}
\item The sequence $(\mu_n)_{n \geq 1}$ is non-increasing;

\item $\mu_n \geq \lambda_{\min}(M)$ and $\lim\limits_{n\to\infty}{\mu_n}=\lambda_{\min}(M)$;

\item Let $\mu\leq-1$ be a real number. If $\widehat{M}(n)$ has a principal submatrix of order at most $m$ with smallest eigenvalue less than $\mu$, then $M$ also has a principal submatrix of order at most $m$ with smallest eigenvalue less than $\mu$.
\end{enumerate}
\end{theorem}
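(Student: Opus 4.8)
The plan rests on a single algebraic observation: $\widehat M(n)$ is a rank-one positive semidefinite perturbation of a block-diagonal matrix. Precisely, let $N_n := \begin{pmatrix} M & O \\ O & -I_n\end{pmatrix}$ be indexed by $\mathcal I\cup\mathcal I_3$, and let $\bu_n$ be the $0/1$ column vector on $\mathcal I\cup\mathcal I_3$ supported exactly on $\mathcal I_2\cup\mathcal I_3$. Comparing entries block by block — adding $\bu_n\bu_n^{T}$ turns the $\mathcal I_2\times\mathcal I_2$ block $M^{2,2}$ into $M^{2,2}+J$, creates the all-ones $\mathcal I_2\times\mathcal I_3$ block $J$, and turns the $\mathcal I_3\times\mathcal I_3$ block $-I$ into $J-I$, leaving every other block untouched — one obtains the identity $\widehat M(n)=N_n+\bu_n\bu_n^{T}$. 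Everything below is extracted from this identity together with Lemma~\ref{interlacing}.

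For part (1), I would note that $\widehat M(n)$ is a principal submatrix of $\widehat M(n+1)$: deleting the row and column indexed by any single element of the $(n+1)$-element copy of $\mathcal I_3$ returns a matrix of the form $\widehat M(n)$, because the blocks $M^{1,1}$, $M^{1,2}$, $M^{2,2}+J$ do not involve $\mathcal I_3$ at all, the $\mathcal I_2\times\mathcal I_3$ block is all-ones of whatever width is needed, and deleting one row and column of $J-I$ of order $n+1$ leaves $J-I$ of order $n$. The interlacing inequality (Lemma~\ref{interlacing}(i)) then gives $\mu_{n+1}\le\mu_n$. For the lower bound in part (2), since $\bu_n\bu_n^{T}\succeq 0$ the difference $\widehat M(n)-N_n$ is positive semidefinite, so by the Courant--Weyl inequality (Lemma~\ref{interlacing}(ii)) we get $\mu_n\ge\lambda_{\min}(N_n)=\min\{\lambda_{\min}(M),-1\}=\lambda_{\min}(M)$, where the last step uses the hypothesis $\lambda_{\min}(M)\le-1$. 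Combined with (1), $(\mu_n)$ is non-increasing and bounded below, hence converges.

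To identify the limit I would estimate a Rayleigh quotient of $\widehat M(n)$. Let $v=(v_1,v_2)$ be a unit eigenvector of $M$ for $\lambda_{\min}(M)$, put $\beta:=\sum_{i\in\mathcal I_2}(v_2)_i$, and set $\mathbf x_n:=(v_1, v_2, -\tfrac{\beta}{n}\mathbf 1)$ on $\mathcal I_1\cup\mathcal I_2\cup\mathcal I_3$. From the identity, $\mathbf x_n^{T}\widehat M(n)\mathbf x_n = \mathbf x_n^{T}N_n\mathbf x_n + (\bu_n^{T}\mathbf x_n)^2$; the $\mathcal I_3$-coordinates were chosen so that $\bu_n^{T}\mathbf x_n=\beta-n\cdot\tfrac{\beta}{n}=0$, while $\mathbf x_n^{T}N_n\mathbf x_n=v^{T}Mv-n(\beta/n)^2=\lambda_{\min}(M)-\beta^2/n$ and $\|\mathbf x_n\|^2=1+\beta^2/n$. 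Hence $\mu_n\le(\lambda_{\min}(M)-\beta^2/n)/(1+\beta^2/n)\to\lambda_{\min}(M)$, which together with the lower bound forces $\lim_{n\to\infty}\mu_n=\lambda_{\min}(M)$.

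For part (3), let $P$ be a principal submatrix of $\widehat M(n)$ of order at most $m$ on an index set $S$, and write $S=S_1\cup S_2\cup S_3$ with $S_i\subseteq\mathcal I_i$. Restricting the identity, $P=P_0+\mathbf w\mathbf w^{T}$, where $\mathbf w$ is the restriction of $\bu_n$ to $S$ and $P_0$ is the principal submatrix of $N_n$ on $S$, namely the block-diagonal matrix with blocks $M_{S_1\cup S_2}$ (the principal submatrix of $M$ on $S_1\cup S_2$) and $-I_{S_3}$. As $\mathbf w\mathbf w^{T}\succeq 0$, Lemma~\ref{interlacing}(ii) gives $\lambda_{\min}(P_0)\le\lambda_{\min}(P)<\mu\le-1$. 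Every eigenvalue contributed by the $-I_{S_3}$ block equals $-1\ge\mu$, so $\lambda_{\min}(P_0)$ must be an eigenvalue of $M_{S_1\cup S_2}$; thus $\lambda_{\min}(M_{S_1\cup S_2})<\mu$, and $M_{S_1\cup S_2}$ is a principal submatrix of $M$ of order $|S_1|+|S_2|\le|S|\le m$, as wanted. The only genuine idea in all of this is the decomposition $\widehat M(n)=N_n+\bu_n\bu_n^{T}$; after that, parts (1)--(3) are bookkeeping with the two inequalities of Lemma~\ref{interlacing}. The one step that needs care is the last one, where the hypothesis $\mu\le-1$ is used to ensure that the eigenvalues of the $-I_{S_3}$-block cannot account for $\lambda_{\min}(P_0)<\mu$, so the violation is genuinely inherited by a submatrix of $M$.
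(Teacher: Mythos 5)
Your proof is correct. Parts (i), (iii), and the lower bound in (ii) follow the same route as the paper: the paper also writes $\widehat M(n)$ as the block-diagonal matrix $\operatorname{diag}(M,-I)$ plus the positive semidefinite matrix supported on $\mathcal I_2\cup\mathcal I_3$ (which is exactly your $\bu_n\bu_n^{T}$, though the paper does not exploit that it has rank one), and then invokes interlacing and Courant--Weyl. Where you genuinely diverge is the identification of the limit in (ii). The paper proves $\lim_n\mu_n=\lambda_{\min}(M)$ by bounding $\lambda_{\min}(M)$ \emph{from below} in terms of $\mu_n$: it takes a Gram factorization $\widehat M(n)-\mu_n I=\widehat N^{T}\widehat N$, introduces correction parameters $\epsilon_1,\epsilon_2$, the averaged vector $\mathbf u=\frac{1}{\sqrt{n(n-\mu_n-1)}}\widehat N^{1,3}\mathbf j$, and an auxiliary matrix $B$ with $B^{T}B=|\mathcal I_2|I-J$, assembling from these a new matrix $N$ with $N^{T}N=M+(-\mu_n+\epsilon_2^{2}|\mathcal I_2|)I$; this yields $\lambda_{\min}(M)\ge\mu_n-\frac{(-\mu_n-1)|\mathcal I_2|}{n-\mu_n-1}$ and hence the limit. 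You instead bound $\mu_n$ \emph{from above} by a single Rayleigh-quotient computation with the test vector $(v_1,v_2,-\tfrac{\beta}{n}\mathbf 1)$, chosen so that the rank-one perturbation contributes nothing; together with the lower bound this squeezes $\mu_n$ to $\lambda_{\min}(M)$. Your argument is shorter and more elementary, at the cost of not producing the explicit Gram-matrix representation of $M+cI$ that the paper's construction yields (a by-product in the spirit of the lattice constructions used elsewhere in the paper, though nothing later depends on it). One small merit of your write-up of (iii) is that you make explicit why the hypothesis $\mu\le-1$ is needed, namely to rule out the $-I_{S_3}$ block accounting for the small eigenvalue of $P_0$; the paper leaves this implicit in its appeal to the restricted Inequality~(\ref{limit}).
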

\begin{proof}

\begin{itemize}
\item[(\rm{i})] If $n_1 \geq n_2$, then $\widehat{M}(n_2)$ is a principal submatrix of $\widehat{M}(n_1)$. By Lemma \ref{interlacing} (\rm{i}), we have $\mu_{n_2} \geq \mu_{n_1}$ and hence $(\mu_n)_{n \geq 1}$ is non-increasing.

\item[(\rm{ii})] As $$\widehat{M}(n)=\begin{pmatrix}
M^{1,1} & M^{1,2}&O\\
M^{2,1} & M^{2,2}&O\\
O & O&-I
\end{pmatrix}+
\begin{pmatrix}
O & O&O\\
O& J&J\\
O & J&J
\end{pmatrix},$$
and $\begin{pmatrix}
O & O&O\\
O& J&J\\
O & J&J
\end{pmatrix}$ is positive semidefinite,  by Lemma \ref{interlacing} (\rm{ii}), we obtain

\begin{equation}\label{limit}
\mu_n\geq\lambda_{\min}(
\begin{pmatrix}
M^{1,1} & M^{1,2}&O\\
M^{2,1} & M^{2,2}&O\\
O & O&-I
\end{pmatrix})=\lambda_{\min}(M)\text{ for }n=1,2,\ldots
\end{equation}

Since $(\mu_n)_{n \geq 1}$ is a non-increasing sequence bounded below by $\lambda_{\min}(M)$, we see that $\lim\limits_{n\to\infty}{\mu_n}$ exists.

To show $\lambda_{\min}(M)\geq\lim\limits_{n\to\infty}{\mu_n}$, we will show that $$\lambda_{\min}(M)\geq\mu_n+\frac{(-\mu_n-1)|\mathcal{I}_2|}{n-\mu_n-1}~\text{ holds for all }n\geq2.$$
As $J-I$ is a principal submatrix of $\widehat{M}(n)$ and has smallest eigenvalue $-1$ if $n \geq 2$, we see that $\mu_n\leq-1$ for $n \geq 2$.
Since the matrix $\widehat{M}(n)-\mu_nI$ is positive semidefinite, there exist matrices $\widehat{N},\widehat{N}^{1,1},\widehat{N}^{1,2}$ and $\widehat{N}^{1,3}$ such that the following holds:
 \begin{enumerate}
\item $\widehat{N}=(\widehat{N}^{1,1},\widehat{N}^{1,2},\widehat{N}^{1,3})$;
 \item $\widehat{M}(n)-\mu_nI=\widehat{N}^T\widehat{N};$
\item $(\widehat{N}^{1,1})^T\widehat{N}^{1,1}=M^{1,1}-\mu_nI;$
\item $(\widehat{N}^{1,2})^T\widehat{N}^{1,2}=M^{2,2}+J-\mu_nI; $ and
\item $(\widehat{N}^{1,3})^T\widehat{N}^{1,3}=J-(\mu_n +1)I$.
\end{enumerate}
We will write $\mathbf{j}$ for a (column) vector with only ones.
Set $\epsilon_1=1-\sqrt{\frac{n}{n-\mu_n-1}}$, $\epsilon_2=\sqrt{\frac{-\mu_n-1}{n-\mu_n-1}}$, and $\mathbf{u}=\frac{1}{\sqrt{n(n-\mu_n-1)}}\widehat{N}^{1,3}\mathbf{j}$.   Then we have
\begin{equation}\label{eq1}
\mathbf{u}^T\mathbf{u}=1,~\mathbf{u}^T\widehat{N}^{1,1}=\mathbf{0},~\mathbf{u}^T\widehat{N}^{1,2}=(1-\epsilon_1)\mathbf{j}^T,~\text{and  }\epsilon^2_2=2\epsilon_1-\epsilon_1^2.
\end{equation}
Let $B$ be the $ \binom{|\mathcal{I}_2|}{2}\times|\mathcal{I}_2|$ matrix defined by $B_{\{i,j\},k}=\delta_{i,k}-\delta_{j,k}$, where $i,j,k\in \mathcal{I}_2$ and $i\neq j$. Then
\begin{equation}\label{eq2}
B^TB=|\mathcal{I}_2|I-J.
\end{equation}
Define the matrix $N$ as follows:
$$N=\begin{pmatrix}
\widehat{N}^{1,1} & \widehat{N}^{1,2}+(\epsilon_1-1)\mathbf{u}\mathbf{j}^T\\
\epsilon_2\sqrt{|\mathcal{I}_2|}I& O \\
O& \epsilon_2B
\end{pmatrix}.
$$
By (\ref{eq1}) and (\ref{eq2}), we find that $N^TN=M+(-\mu_n+\epsilon_2^2|\mathcal{I}_2|)I$ holds and this completes the proof.

\item[(\rm{iii})] Suppose $\widehat{M}^\prime(n)$ is a principal submatrix of $\widehat{M}(n)$ with order $m$ and smallest eigenvalue less than $\mu$, indexed by $\mathcal{J}_1 \cup \mathcal{J}_2\cup\mathcal{J}_3$, where $\mathcal{J}_i \subseteq \mathcal{I}_i$ for $i=1,2,3.$ Let $M^\prime$ be the principal submatrix of  $M$ indexed by $\mathcal{J}_1 \cup \mathcal{J}_2$. Then replacing $M$ and $\widehat{M}(n)$ in Inequality (\ref{limit}) by $M^\prime$ and $\widehat{M}^\prime(n)$, respectively, we see that $M^
\prime$ has smallest eigenvalue at most $\mu$.
\end{itemize}
\end{proof}
\section{Minimal forbidden fat Hoffman graphs}

Let $\Go_3$ be the set of fat Hoffman graphs with smallest eigenvalue at least $-3$. Then, by Lemma \ref{hoff}, for any $\ho \in \Go_3$, every fat induced Hoffman subgraph of $\ho$ is also contained in $\Go_3$. A {\it minimal forbidden fat Hoffman graph} for $\Go_3$ is a fat Hoffman graph $\mathfrak{f}\notin \Go_3$ such that every proper fat induced Hoffman subgraph of $\fo$ is contained in $\Go_3$.

In this section, we will show that the number of isomorphism classes of minimal forbidden fat Hoffman graphs for $\Go_3$ is finite and all these minimal forbidden fat Hoffman graphs have at most $10$ slim vertices. We first need a forbidden matrix result.

\subsection{A forbidden matrix result}
In this subsection, we will show a forbidden matrix result, which will be used later to show that any minimal forbidden fat Hoffman graph for $\Go_3$ has at most $10$ slim vertices.  We will use a result of Vijayakumar to show this result.

In \cite{signgraph1}, Vijayakumar proved that any connected signed graph with smallest eigenvalue less than $-2$ has an induced signed subgraph with at most 10 vertices and smallest eigenvalue less than $-2$.  As a consequence of this result, he showed the following:

\begin{theorem}(\cite[Theorem 4.2]{signgraph1})\label{Vi1} Let $M$ be a real symmetric matrix, whose diagonal entries are $0$ and off-diagonal entries are integers. If the smallest eigenvalue of $M$ is less than $-2$, then one of its principal submatrices of order at most $10$ also has an eigenvalue less than $-2$.

\end{theorem}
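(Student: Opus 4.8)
The plan is to derive Theorem~\ref{Vi1} from the signed-graph statement of Vijayakumar recalled just above --- that every connected signed graph with smallest eigenvalue less than $-2$ has an induced signed subgraph on at most $10$ vertices with smallest eigenvalue less than $-2$ --- by inducting on the order $n$ of $M$ while peeling off off-diagonal entries of absolute value at least $2$.

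If $n\le 10$ we may take $M$ itself, so assume $n\ge 11$. If every off-diagonal entry of $M$ lies in $\{0,\pm1\}$, then $M$ is the adjacency matrix of a signed graph $\widehat G$ on $n$ vertices with $\lambda_{\min}(\widehat G)<-2$; the spectrum of a disconnected signed graph is the union of the spectra of its components, so some component $\widehat G'$ has smallest eigenvalue less than $-2$, and Vijayakumar's theorem applied to $\widehat G'$ yields the required principal submatrix of order at most $10$. If instead some off-diagonal entry satisfies $|M_{ij}|\ge 3$, then the $2\times 2$ principal submatrix of $M$ indexed by $\{i,j\}$ has smallest eigenvalue $-|M_{ij}|\le-3<-2$, so we are done.

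The remaining --- and, I expect, the only genuinely delicate --- case is that all off-diagonal entries of $M$ lie in $\{0,\pm1,\pm2\}$, with at least one of them equal to $\pm2$. Conjugating $M$ by a diagonal $\pm1$ matrix, which preserves symmetry, the zero diagonal, integrality of entries, and the eigenvalues of every principal submatrix, we may assume $M_{12}=2$. Put $x=\be_1-\be_2$; then $x^{T}Mx=-4$ and $x^{T}x=2$, so $\lambda_{\min}(M)\le-2$ already. If some index $k\notin\{1,2\}$ has $M_{1k}\neq M_{2k}$, then a short computation shows that for a suitable small $\epsilon\neq0$ the vector $x+\epsilon\be_k$ has Rayleigh quotient strictly below $-2$; since it is supported on $\{1,2,k\}$, the $3\times 3$ principal submatrix $M[\{1,2,k\}]$ has an eigenvalue less than $-2$. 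If no such $k$ exists, then $M_{1k}=M_{2k}$ for all $k$, which forces $Mx=-2x$, so any eigenvector $w$ for $\mu:=\lambda_{\min}(M)<-2$ is orthogonal to $x$, i.e. $w_1=w_2$. I would then verify that $z:=w+w_2x$ is nonzero, is supported on $\{1,3,4,\dots,n\}$, and has Rayleigh quotient $z^{T}Mz/z^{T}z=(\mu-4w_2^{2})/(1+2w_2^{2})<-2$; hence the principal submatrix $M[\{1,3,\dots,n\}]$ has zero diagonal, integer entries, order $n-1$, and smallest eigenvalue less than $-2$, so the induction hypothesis applies and finishes the proof.

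The main obstacle is precisely this weight-$2$ case: a single edge of weight $2$ contributes only the eigenvalue $-2$, not anything strictly smaller, so one cannot simply pass to a $2\times 2$ submatrix. The two Rayleigh-quotient computations above --- the perturbation $x+\epsilon\be_k$ when the endpoints of the edge are not twins, and the deletion of one endpoint through $z=w+w_2x$ when they are --- are exactly what keeps the strict inequality while shrinking either to a $3\times 3$ submatrix or to a matrix of smaller order; everything else is bookkeeping and the min-max characterization of the smallest eigenvalue.
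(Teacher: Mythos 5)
Your argument is correct, but note that the paper does not actually prove this statement: Theorem~\ref{Vi1} is quoted verbatim from Vijayakumar with a citation, and the only ``proof'' in the text is the remark that it is a consequence of his connected-signed-graph theorem. What you have done is reconstruct that consequence, and the reconstruction holds up. The easy cases ($n\le 10$; all off-diagonal entries in $\{0,\pm1\}$, handled by passing to a component and invoking the signed-graph theorem; some $|M_{ij}|\ge 3$, handled by the $2\times 2$ submatrix with eigenvalue $-|M_{ij}|$) are fine, and the delicate weight-$2$ case is handled correctly: with $M_{12}=2$ and $x=\mathbf{e}_1-\mathbf{e}_2$ one gets $x^TMx/x^Tx=-2$; if $M_{1k}\ne M_{2k}$ for some $k$ then $y=x+\epsilon\mathbf{e}_k$ with $\epsilon(d+\epsilon)<0$, $d=M_{1k}-M_{2k}$, gives Rayleigh quotient $(-4+2\epsilon d)/(2+\epsilon^2)<-2$ on a $3\times3$ support; otherwise $Mx=-2x$, the minimal eigenvector $w$ satisfies $w_1=w_2$, and $z=w+w_2x$ has $z_2=0$, $z\ne 0$, and quotient $(\mu-4w_2^2)/(1+2w_2^2)<-2$, which pushes the problem to a principal submatrix of order $n-1$ and closes the induction. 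This is in the same spirit as the paper's own reduction technique for Theorem~\ref{Vi2} (where a $-1$ diagonal entry is absorbed by blowing up with an exact $-1$-eigenspace and then interlacing back down via Theorem~\ref{Hoffman}(iii)), but your treatment of the $\pm2$ off-diagonal entries via the explicit $-2$-eigenvector is the step the paper leaves entirely to the reference; supplying it makes the chain of citations self-contained modulo only the connected-signed-graph theorem itself.
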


By Theorem \ref{Hoffman}, we obtain the following generalization of Theorem \ref{Vi1}.

\begin{theorem}\label{Vi2}
Let $M$ be a real symmetric matrix, whose diagonal entries are $0$ or $-1$ and off-diagonal entries are integers. If the smallest eigenvalue of $M$ is less than $-2$, then one of its principal submatrices of order at most $10$ also has an eigenvalue less than $-2$.
\end{theorem}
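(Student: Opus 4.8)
The plan is to reduce to Vijayakumar's Theorem \ref{Vi1} by using the blow-up construction of Theorem \ref{Hoffman} to convert every diagonal entry equal to $-1$ into a diagonal entry equal to $0$, while preserving integrality of all off-diagonal entries. If every diagonal entry of $M$ is already $0$, then Theorem \ref{Vi1} applies verbatim and we are done, so I would assume that $M$ has at least one diagonal entry equal to $-1$.

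Index $M$ by a finite set $\mathcal{I}$, let $\mathcal{I}_2\subseteq\mathcal{I}$ be the non-empty set of indices $i$ with $M_{i,i}=-1$, and put $\mathcal{I}_1=\mathcal{I}\setminus\mathcal{I}_2$ (possibly empty), so $M_{i,i}=0$ for $i\in\mathcal{I}_1$. Write $M$ in block form with respect to $\{\mathcal{I}_1,\mathcal{I}_2\}$. Since $\lambda_{\min}(M)<-2\leq-1$, the hypotheses of Theorem \ref{Hoffman} are satisfied, and for each $n\geq1$ I form the matrix $\widehat{M}(n)$ of that theorem and set $\mu_n=\lambda_{\min}(\widehat{M}(n))$. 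The key observation is that $\widehat{M}(n)$ satisfies the hypotheses of Theorem \ref{Vi1}: its diagonal entries are $0$ on $\mathcal{I}_1$ (inherited from $M^{1,1}$), $-1+1=0$ on $\mathcal{I}_2$ (the extra $J$ contributes $1$ to the diagonal of the block $M^{2,2}+J$), and $0$ on $\mathcal{I}_3$ (the block $J-I$ has zero diagonal); and its off-diagonal entries are all integers, those inside $\mathcal{I}_1$ or between $\mathcal{I}_1$ and $\mathcal{I}_2$ being inherited from $M$, those inside $\mathcal{I}_2$ being of the form (integer)$\,+\,1$, and those involving $\mathcal{I}_3$ being equal to $1$.

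By Theorem \ref{Hoffman}(ii), $\mu_n\to\lambda_{\min}(M)<-2$, so I may fix an $n$ with $\mu_n<-2$. Applying Theorem \ref{Vi1} to $\widehat{M}(n)$ then yields a principal submatrix of order at most $10$ with smallest eigenvalue less than $-2$, and Theorem \ref{Hoffman}(iii), applied with $\mu=-2$ (legitimate since $-2\leq-1$) and $m=10$, transfers this conclusion back to $M$: the matrix $M$ has a principal submatrix of order at most $10$ with smallest eigenvalue less than $-2$, as required.

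The argument is essentially a concatenation of Theorems \ref{Hoffman} and \ref{Vi1}, so there is no genuinely hard step; the one point that needs care is the bookkeeping in the second paragraph, namely verifying that adding $J$ to the $\mathcal{I}_2\times\mathcal{I}_2$ block together with the zero-diagonal, integral, positive-semidefinite block $J-I$ is exactly what turns the $-1$'s on the diagonal into $0$'s without spoiling integrality of the off-diagonal entries, so that Vijayakumar's theorem becomes applicable to $\widehat{M}(n)$.
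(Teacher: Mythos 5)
Your proposal is correct and follows essentially the same route as the paper: partition the index set by diagonal entry, pass to $\widehat{M}(n)$ from Theorem \ref{Hoffman} (which has zero diagonal and integral off-diagonal entries), apply Theorem \ref{Vi1}, and transfer the small principal submatrix back to $M$ via Theorem \ref{Hoffman}(iii). Your write-up simply makes explicit the diagonal/off-diagonal bookkeeping and the choice of $n$ with $\mu_n<-2$ that the paper leaves implicit.
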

\begin{proof}
Let $\mathcal{I}_1:=\{i\mid M_{ii}=0\}$ and $\mathcal{I}_2:=\{i\mid M_{ii}=-1\}$ be two disjoint sets of indices. Without loss of generality, we may assume that $M$ is the block matrix $\bordermatrix{
 ~& \mathcal{I}_1& \mathcal{I}_2 \cr
\mathcal{I}_1& M^{1,1} & M^{1,2}\cr
\mathcal{I}_2& M^{2,1} & M^{2,2}\cr
 }$. Then the matrix $\widehat{M}(n)$, as defined in Theorem \ref{Hoffman}, has $0$-diagonal and integral off-diagonal entries. Hence the result follows by Theorem \ref{Vi1} and Theorem \ref{Hoffman}.

\end{proof}

\subsection{Forbidden special matrices}

In this subsection, we will give some necessary conditions on the matrices that can be the special matrices of minimal forbidden fat Hoffman graphs for $\Go_3$.

For two square matrices $M_1$ and $M_2$, we say that $M_1$ is {\it equivalent} to $M_2$, if there exists a permutation matrix $P$ such that $P^T M_1 P= M_2$. A square matrix $M$ is called {\it irreducible} if $M$ is not equivalent to a block diagonal matrix.

\begin{proposition}\label{sp}
If $\fo$ is a minimal forbidden fat Hoffman graph for $\Go_3$, then the matrix $M := Sp(\fo)+I$ satisfies the following properties.
\begin{enumerate}
 \item $M$ is symmetric and irreducible;
 \item $M_{ij}\in\mathbb{Z}$, $M_{ij} \leq 1$, and $M_{ii}\leq0$  for all $i$, $j$;
 \item $M_{ij} \geq \max\{M_{ii}, M_{jj}\}-1$  for all $i$, $j$;
 \item The smallest eigenvalue of $M$ is less than $-2$;
 \item Every principal submatrix of $M$ has smallest eigenvalue at least $-2$.
\end{enumerate}

\end{proposition}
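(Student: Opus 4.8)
The plan is to translate the five claims into statements about the special matrix $Sp(\fo)$ — which is indexed by the slim vertices $V_s(\fo)=\{x_1,\dots,x_m\}$ — and then to exploit the minimality of $\fo$. Write $M=Sp(\fo)+I$. By Lemma~\ref{fatnbr} we have $M_{x_i,x_i}=1-|N_{\fo}^{f}(x_i)|$ and, for $i\neq j$, $M_{x_i,x_j}=(A_s)_{x_i,x_j}-|N_{\fo}^{f}(x_i)\cap N_{\fo}^{f}(x_j)|$. Properties (ii) and (iii) then follow by inspection. All entries are integers; since $\fo$ is fat, $|N_{\fo}^{f}(x_i)|\geq 1$, so $M_{x_i,x_i}\leq 0$; and $(A_s)_{x_i,x_j}\in\{0,1\}$ forces the off-diagonal entries to be $\leq 1$, which gives (ii). For (iii), the case $i=j$ is trivial, and for $i\neq j$ I would use $|N_{\fo}^{f}(x_i)\cap N_{\fo}^{f}(x_j)|\leq\min\{|N_{\fo}^{f}(x_i)|,|N_{\fo}^{f}(x_j)|\}$ together with $(A_s)_{x_i,x_j}\geq 0$ to obtain $M_{x_i,x_j}\geq-\min\{|N_{\fo}^{f}(x_i)|,|N_{\fo}^{f}(x_j)|\}=\max\{M_{x_i,x_i},M_{x_j,x_j}\}-1$.

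The key observation behind (i), (iv) and (v) is that induced Hoffman subgraphs generated by slim-vertex subsets correspond exactly to principal submatrices of $Sp(\fo)$. Indeed, for $W\subseteq V_s(\fo)$ the Hoffman graph $\langle W\rangle_{\fo}$ retains every fat neighbour of every $w\in W$, so $N_{\langle W\rangle_{\fo}}^{f}(w)=N_{\fo}^{f}(w)$ for all $w\in W$; applying Lemma~\ref{fatnbr} to $\langle W\rangle_{\fo}$ then shows that $Sp(\langle W\rangle_{\fo})$ equals the principal submatrix of $Sp(\fo)$ on the index set $W$. Writing $M[W]$ for the principal submatrix of $M$ on $W$, it follows that $\lambda_{\min}(M[W])=\lambda_{\min}(\langle W\rangle_{\fo})+1$. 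Taking $W=V_s(\fo)$ and using $\fo\notin\Go_3$, i.e. $\lambda_{\min}(\fo)=\lambda_{\min}(Sp(\fo))<-3$, yields $\lambda_{\min}(M)<-2$, which is (iv). For (v) — which must be read as asserting that every \emph{proper} principal submatrix has smallest eigenvalue at least $-2$, as (iv) forces — I would take $\emptyset\neq W\subsetneq V_s(\fo)$: then $\langle W\rangle_{\fo}$ is still fat (fatness of $\fo$ is inherited because all fat neighbours are kept) and is a proper induced Hoffman subgraph of $\fo$, so by minimality $\langle W\rangle_{\fo}\in\Go_3$, whence $\lambda_{\min}(M[W])=\lambda_{\min}(\langle W\rangle_{\fo})+1\geq-2$.

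It then remains to prove (i). Symmetry of $M$ is inherited from $Sp(\fo)$. For irreducibility, suppose $M$ is reducible; then so is $Sp(\fo)=M-I$, so there is a partition $V_s(\fo)=V_1\sqcup V_2$ with $V_1,V_2$ nonempty and vanishing off-diagonal blocks. Then $\lambda_{\min}(Sp(\fo))=\min\{\lambda_{\min}(Sp(\fo)[V_1]),\lambda_{\min}(Sp(\fo)[V_2])\}<-3$, so for some $i$ the proper fat induced Hoffman subgraph $\langle V_i\rangle_{\fo}$ has smallest eigenvalue $<-3$, contradicting the minimality of $\fo$. I do not expect any serious obstacle here: the only step that needs care is the identification of $Sp(\langle W\rangle_{\fo})$ with a principal submatrix of $Sp(\fo)$ (this is precisely what allows minimality to be invoked in (i) and (v)), together with the correct reading of (v) as a statement about proper submatrices; everything else is a direct computation from Lemma~\ref{fatnbr} and the definition of a minimal forbidden fat Hoffman graph.
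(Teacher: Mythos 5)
Your proposal is correct and follows essentially the same route as the paper's (much terser) proof: properties (ii) and (iii) from Lemma~\ref{fatnbr}, property (iv) from $\fo\notin\Go_3$, and properties (i) and (v) from the identification of $Sp(\langle W\rangle_{\fo})$ with the principal submatrix of $Sp(\fo)$ indexed by $W$ combined with minimality. Your explicit justification of that identification, and your observation that (v) must be read as a statement about \emph{proper} principal submatrices (which is exactly how the paper's proof treats it, taking $U$ a proper subset of $V_s(\fo)$), are both accurate fillings-in of details the paper leaves implicit.
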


\begin{proof}

Note that the columns and the rows of $Sp(\fo)$ are indexed by the slim vertices of $\fo$ and the smallest eigenvalue of $M$ is less than $-2$.

Since $Sp(\fo)$ is symmetric, $M$ is also symmetric. Let $U$ be a proper subset of $V_s(\fo)$ and let $\fo'$ be the Hoffman subgraph generated by $U$. Then $Sp(\fo') +I$ is exactly the principal submatrix indexed by $U$. This shows (v) and that $M$ is irreducible.

%Let $B_{ij}(\fo)$ be the number of common fat neighbor of $i$ and $j$ for two slim vertices $i$ and $j$ of $\fo$. By Lemma \ref{fatnbr}, we can check that

%$$ M_{ij} = \begin{cases} 1- B_{ii}(\fo) & \text{if } i =j,  \\
%1 - B_{ij}(\fo) & \text{if } i  \text{ and } j\text{ are adjacent,}\\

%- B_{ij}(\fo) & \text{if } i  \text{ and } j\text{ are not adjacent.}

%\end{cases}$$

By Lemma \ref{fatnbr}, (ii) and (iii) hold. It is clear that (iv) holds.

\end{proof}

Let $\widehat{\mathcal{M}}$ be the set of square matrices containing all matrices $M \in \widehat{\mathcal{M}}$ satisfying the five properties of Proposition \ref{sp}.

\begin{proposition} \label{forbidden}
Let $M \in \widehat{\mathcal{M}}$ be a matrix.
\begin{enumerate}

\item If $M$ contains a diagonal entry at most $-3$, then $M = M_1(a)$ for some integer $a \geq 3$ where $M_{1}(a)= \begin{pmatrix}
-a
\end{pmatrix}$;

\item If $M$ contains a diagonal entry $-2$, then $M$ is equivalent to $M_2(b_1, b_2)$ for some integers  $b_1 \in \{1, -1, -2, -3\}$,  and $-2 \leq b_2 \leq \min\{0,b_1 +1\}$, where $M_{2}(b_1, b_2)=\begin{pmatrix}
-2 & b_1 \\
b_1 & b_2
\end{pmatrix}$;

\item If all diagonal entries of $M$ are $0$ or $-1$, then $M$ has order at most $10$.
%\item If $M$$M_{3}=\begin{pmatrix}
%-1 & -2 \\
%-2 & -1
%\end{pmatrix}$

%\item \begin{equation*} \begin{array}[c]{ccc}

  %M_{4}=\begin{pmatrix}-1 & -1 & -1\\-1 & -1 & -1 \\-1 & -1 & -1\end{pmatrix},&
 % M_{5}=\begin{pmatrix}-1 & 1 & 1\\1 & -1 & -1 \\1 & -1 & -1\end{pmatrix},\\

 %\\

    %M_{6}=\begin{pmatrix}-1 & 0 & 1\\0 & -1& -1 \\1 & -1 & -1\end{pmatrix}, &
 % M_{7}=\begin{pmatrix}-1 & 0 & 1\\0 & -1 & 1 \\1 & 1 & -1\end{pmatrix},\\

 %\\
 % M_{8}=\begin{pmatrix}-1 & 0 & -1\\0 & -1 & -1 \\-1 & -1 & -1\end{pmatrix}.\\

%\end{array}
%\end{equation*}
\end{enumerate}
\end{proposition}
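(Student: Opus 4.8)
The plan is to prove the three statements separately, using properties (iv) and (v) of Proposition~\ref{sp} as the main tools. The crucial point is that property (v) has to be read as concerning \emph{proper} principal submatrices, since otherwise it would contradict (iv); hence $M$ is the unique principal submatrix of itself whose smallest eigenvalue lies below $-2$, so every time we produce a small principal submatrix with smallest eigenvalue less than $-2$, that submatrix must be all of $M$. A second observation used throughout: if $M$ has order at least $2$, then every diagonal entry lies in $\{0,-1,-2\}$ --- it is at most $0$ by (ii), and it cannot be at most $-3$ because the corresponding $1\times 1$ proper principal submatrix would violate (v). Part (i) is then immediate: if some $M_{ii}\le -3$, the submatrix $(M_{ii})$ has eigenvalue $\le -3<-2$, hence equals $M$, so $M$ has order $1$ and by (iv) equals $(-a)$ with $a=-M_{ii}\ge 3$ an integer; irreducibility of a $1\times 1$ matrix is automatic.

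For part (ii), assume (after a permutation) that $M_{11}=-2$ and suppose for contradiction that $M$ has order at least $3$. For each index $j\ne 1$, the $2\times 2$ principal submatrix $\left(\begin{smallmatrix}-2 & M_{1j}\\ M_{1j} & M_{jj}\end{smallmatrix}\right)$ is proper, so by (v) its smallest eigenvalue is at least $-2$; a short computation of that smallest eigenvalue in the three cases $M_{jj}\in\{0,-1,-2\}$ --- it equals $-1-\sqrt{1+M_{1j}^2}$, $\tfrac{1}{2}\bigl(-3-\sqrt{1+4M_{1j}^2}\bigr)$, and $-2-|M_{1j}|$, respectively --- forces $M_{1j}=0$ in every case. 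Then the first row and column of $M$ are zero off the diagonal, so $M$ is block diagonal, contradicting (i). Thus $M$ has order at most $2$, and order $1$ is impossible since $(-2)$ has smallest eigenvalue $-2$, not less than $-2$; so $M=\left(\begin{smallmatrix}-2 & b_1\\ b_1 & b_2\end{smallmatrix}\right)$ with $b_2\in\{0,-1,-2\}$. Finally, (ii) gives $b_1\le 1$, (iii) gives $b_1\ge \max\{-2,b_2\}-1=b_2-1$, and irreducibility forbids $b_1=0$, so $b_1\in\{1,-1,-2,-3\}$; rewriting $b_1\ge b_2-1$ as $b_2\le b_1+1$ and combining with $-2\le b_2\le 0$ gives $-2\le b_2\le\min\{0,b_1+1\}$, as required.

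For part (iii), note that $M$ is real symmetric with diagonal entries in $\{0,-1\}$, integral off-diagonal entries by (ii), and smallest eigenvalue less than $-2$ by (iv); hence Theorem~\ref{Vi2} produces a principal submatrix of order at most $10$ with an eigenvalue below $-2$. By the reading of (v) above, that submatrix cannot be proper, so it is $M$ itself, and therefore $M$ has order at most $10$. Overall, the only step requiring actual computation is the $2\times 2$ eigenvalue analysis in part (ii); part (iii) is a direct appeal to Theorem~\ref{Vi2}, and everything else is bookkeeping. The one place to be careful --- and the conceptual crux of the whole proposition --- is the interpretation of property (v) as a statement about proper principal submatrices, which is exactly what makes each of the three arguments close.
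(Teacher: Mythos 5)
Your proof is correct and follows essentially the same route as the paper: the paper dismisses (i) and (ii) as ``straightforward to check'' and derives (iii) directly from Theorem~\ref{Vi2}, which is exactly what you do, with the added benefit of actually supplying the $2\times 2$ eigenvalue computations and the (necessary) reading of Proposition~\ref{sp}(v) as a statement about proper principal submatrices. No gaps.
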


\begin{proof} Statements (i) and (ii) are straightforward to check.
Statement (iii) follows directly from Theorem \ref{Vi2}.
  \end{proof}

Now we are in the position to show that there are only finitely many (isomorphism classes of) minimal forbidden fat Hoffman graphs for $\Go_3$.

\begin{proposition}\label{forbiddenhoff}
Let $\fo$ be a minimal forbidden fat Hoffman graph for $\Go_3$. Then $\fo$ satisfies exactly one of the following statements:

\begin{enumerate}

\item There exists a slim vertex of $\fo$ which contains at least four fat neighbors. In this case, $\fo$ is isomorphic to

$$\raisebox{-0.35ex}{\includegraphics[scale=0.28]{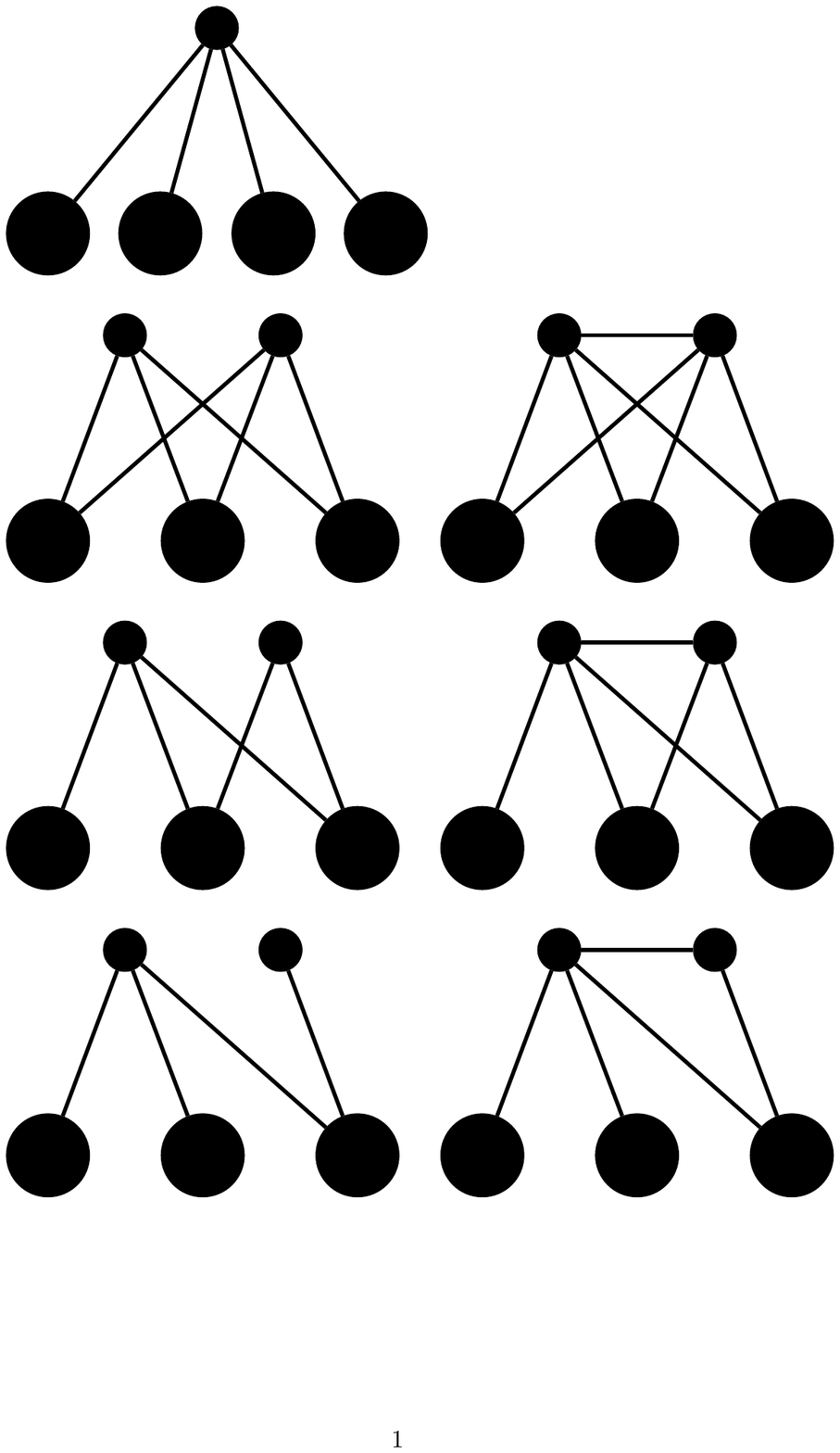}}.$$

\item Every slim vertex of $\fo$ has at most three fat neighbors. In this case, $\fo$ has at most $10$ slim vertices.

\end{enumerate}

In particular, there are finitely many isomorphism classes of minimal forbidden fat Hoffman graphs for $\Go_3$.

\end{proposition}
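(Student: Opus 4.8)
The plan is to split according to the maximum number of fat neighbors a slim vertex of $\fo$ can have, which by Lemma \ref{fatnbr} corresponds to the most negative diagonal entry of $M := Sp(\fo) + I \in \widehat{\mathcal{M}}$. First I would dispose of the case where some slim vertex $x$ has at least four fat neighbors, so that $M$ has a diagonal entry $\le -4$. Restricting $M$ to the single index $x$ gives a $1\times1$ principal submatrix $(-a)$ with $a \ge 4$, whose eigenvalue $-a < -3 \le -2$, contradicting property (v) of Proposition \ref{sp} unless $\fo$ is generated by $\{x\}$ alone. So in this case $\fo$ has a unique slim vertex with exactly four fat neighbors (five or more would again violate minimality, since the sub-Hoffman-graph on $x$ with four of the fat vertices is already forbidden), giving the displayed graph. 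The content here is just matching the combinatorics of Lemma \ref{fatnbr} with Proposition \ref{forbidden}(i), plus the observation that a minimal forbidden graph cannot properly contain a forbidden one.

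Next, assume every slim vertex has at most three fat neighbors, i.e. every diagonal entry of $M$ is $\ge -3$. If some diagonal entry equals $-3$, then by Proposition \ref{forbidden}(i) applied to that diagonal position — more precisely, a slim vertex with three fat neighbors and another slim vertex sharing fat neighbors would push a $1\times1$ or small principal submatrix below $-2$ unless $M$ is itself $1\times1$ — I would argue that $\fo$ has a single slim vertex, but then $Sp(\fo)=(-3)$ has smallest eigenvalue $-3 \ge -3$, so $\fo \in \Go_3$, not forbidden; hence no diagonal entry is $-3$ when $\fo$ is minimal forbidden with more structure. Wait — I should be careful: a diagonal entry $-3$ is allowed in $\widehat{\mathcal M}$ only in the $1\times1$ case by Proposition \ref{forbidden}(i), and the $1\times1$ matrix $(-3)$ has smallest eigenvalue $-3$, which is \emph{not} less than $-2$... actually $-3 < -2$, so $(-3) \in \widehat{\mathcal M}$ fails property (iv)? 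No: $-3 < -2$ holds, so the $1\times 1$ matrix $(-3)$ has smallest eigenvalue $< -2$. But $Sp(\fo) + I = (-3)$ means $Sp(\fo) = (-4)$, eigenvalue $-4 < -3$, so this $\fo$ \emph{is} forbidden for $\Go_3$ and minimal — it is the single slim vertex with four fat neighbors, already covered in case (1). So in case (2) proper, every diagonal entry of $M$ is $0$ or $-1$, and I invoke Proposition \ref{forbidden}(iii) directly: $M$ has order at most $10$, hence $\fo$ has at most $10$ slim vertices.

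Finally, finiteness of the isomorphism classes: in case (1) there is exactly one graph. In case (2), the number of slim vertices is bounded by $10$ and the number of fat vertices is bounded because each slim vertex has at most three fat neighbors and each fat vertex is adjacent to at least one slim vertex (so at most $30$ fat vertices, in fact fewer); the special matrix entries are bounded integers by Proposition \ref{sp}(ii)-(iii), and a fat Hoffman graph is determined up to isomorphism by its slim graph together with the fat-neighbor incidence data, of which there are finitely many configurations on a bounded vertex set. Hence only finitely many isomorphism classes in total.

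\textbf{Main obstacle.} I expect the delicate point to be the boundary bookkeeping around diagonal entry $-3$: making precise why a diagonal entry equal to $-3$ cannot occur in a minimal forbidden fat Hoffman graph other than through the single-vertex-four-fat-neighbors configuration (equivalently, why Proposition \ref{forbidden}(i)'s hypothesis "at most $-3$" forces the $1\times1$ case, and reconciling that with the $-2$ shift between $Sp(\fo)$ and $M$). Once the case $\max|N_\fo^f(x)| \le 3$ with all diagonals in $\{0,-1\}$ is isolated, Proposition \ref{forbidden}(iii) (i.e., Vijayakumar's bound via Theorem \ref{Vi2}) does all the work, and the finiteness count is routine.
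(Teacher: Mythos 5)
Your overall strategy is the same as the paper's: split on the maximum number of fat neighbors of a slim vertex, translate this via Lemma \ref{fatnbr} into a condition on the diagonal of $M=Sp(\fo)+I$, and feed the result into Proposition \ref{forbidden}. Case (i) you handle correctly, modulo an off-by-one slip: a slim vertex with $k\geq 4$ fat neighbors gives $M_{xx}=1-k\leq -3$, so the $1\times 1$ principal submatrix is $(-a)$ with $a\geq 3$, not $a\geq 4$; the contradiction with Proposition \ref{sp}(v) and the minimality argument forcing exactly four fat neighbors both go through regardless, and match the paper.

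The genuine gap is in case (ii). ``Every slim vertex has at most three fat neighbors'' translates to $M_{xx}=1-|N^{f}_{\fo}(x)|\geq -2$, so the diagonal entries of $M$ lie in $\{0,-1,-2\}$, not in $\{0,-1\}$ as you assert. Your detour about diagonal entries equal to $-3$ is vacuous here (that value corresponds to four fat neighbors and belongs to case (i)), and it distracts you from the case you actually need but never treat: a slim vertex with exactly three fat neighbors, i.e.\ a diagonal entry $-2$ of $M$. That case is not covered by Proposition \ref{forbidden}(iii), whose hypothesis is that all diagonal entries are $0$ or $-1$. The paper closes it with Proposition \ref{forbidden}(ii): if $M$ has a diagonal entry $-2$, then $M$ is equivalent to the $2\times 2$ matrix $M_2(b_1,b_2)$ and hence has order $2\leq 10$. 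The fix is one line --- cite Proposition \ref{forbidden}(ii) and (iii) together, as the paper does --- but as written your argument silently drops a whole case, and the repeated confusion between the diagonals of $Sp(\fo)$ and of $M=Sp(\fo)+I$ is what caused it. Your finiteness count at the end (at most $10$ slim vertices, hence boundedly many fat vertices and configurations) is correct and is essentially what the paper leaves implicit.
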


\begin{proof}

Suppose that there exists a slim vertex of $\fo$ with at least four fat neighbors. Then $Sp(\fo) + I$ is equivalent to $M_1(a)$ for some $a \geq 3$. If $a$ is at least $4$, then $\fo$ contains \raisebox{-0.3ex}{\includegraphics[scale=0.13]{m4}} as a proper induced Hoffman subgraph. By the minimality of $\fo$, we have $a=3$. Thus $Sp(\fo) = \begin{pmatrix} -4 \end{pmatrix}$ and $\fo$ is isomorphic to \raisebox{-0.3ex}{\includegraphics[scale=0.13]{m4}}.

Now, suppose that every slim vertex of $\fo$ has at most three fat neighbors. Then $Sp(\fo)+I$ has order at most $10$ by Proposition \ref{forbidden} (ii) and (iii). This shows that $\fo$ has at most $10$ slim vertices.

\end{proof}

\section{Proof of the main theorem}

In this section, we will prove Theorem \ref{mainthm}.

\begin{lemma}\label{main1}
There exists a positive integer $p$ such that if G is a connected graph with smallest eigenvalue $\lambda_{\min}(G)$ at least $-3$, then the associated Hoffman graph $\go(G,12,r)$ has smallest eigenvalue at least $-3$  for any integer $r\ge p$.
\end{lemma}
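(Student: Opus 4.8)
The plan is to combine the finiteness of minimal forbidden fat Hoffman graphs (Proposition \ref{forbiddenhoff}) with the structural control provided by Proposition \ref{assohoff}. The key observation is that the associated Hoffman graph $\go(G,12,r)$ fails to have smallest eigenvalue at least $-3$ precisely when it contains some minimal forbidden fat Hoffman graph $\fo$ for $\Go_3$ as an induced Hoffman subgraph (here we use Lemma \ref{hoff}: having a forbidden induced subgraph is the only obstruction, and minimality lets us restrict to the minimal ones). By Proposition \ref{forbiddenhoff}, there are only finitely many such $\fo$, and each has at most $10$ slim vertices and (since each slim vertex has at most $3$ fat neighbors, or $\fo$ is the single exceptional graph with one slim vertex) a bounded number of fat vertices. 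Let $\sigma$ be an upper bound on the number of slim vertices and $\phi$ an upper bound on the number of fat vertices over this finite list.

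First I would argue by contradiction: suppose no such $p$ exists, so for arbitrarily large $r$ the graph $\go(G,12,r)$ (for some connected $G$ with $\lambda_{\min}(G)\ge -3$, depending on $r$) has smallest eigenvalue less than $-3$, hence contains some minimal forbidden $\fo$ as an induced Hoffman subgraph. The second step is to apply Proposition \ref{assohoff} with $m=12$, with $\phi$ and $\sigma$ the bounds above, and with $p=4$ (the clique-blow-up size needed so that we land in the regime $-3\le\lambda_{\min}(G(\fo,4))<-2$, or more simply any fixed $p\ge 1$ will do for the embedding conclusion). This yields a threshold $n=n(12,\phi,\sigma,4)$; note that $G$ must avoid $\widetilde K_{24}$ as an induced subgraph, which holds because $\widetilde K_{24}$ has smallest eigenvalue less than $-3$ (a $K_{2m}$ with a vertex joined to half of it is a standard small configuration with eigenvalue below $-3$ once $m\ge$ some small bound — this needs a one-line check, or can be cited). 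For any $r\ge n$, Proposition \ref{assohoff} then forces $G(\fo,4)$ to be an induced subgraph of $G$. But $G(\fo,4)$ has smallest eigenvalue at least $\lambda_{\min}(\fo)<-3$ is false in general; rather, the relevant point is that $\fo$ being forbidden means $\lambda_{\min}(\fo)<-3$, and by Theorem \ref{Ostrowski}, $\lambda_{\min}(G(\fo,n_i))\ge\lambda_{\min}(\fo)$ with limit $\lambda_{\min}(\fo)$, so we instead need the finite-clique version: for a suitable fixed clique size, $G(\fo,\cdot)$ still has smallest eigenvalue less than $-3$. Concretely, since $\lambda_{\min}(\fo)<-3$ strictly and $\lambda_{\min}(G(\fo,n))\downarrow\lambda_{\min}(\fo)$, there is a finite $n_0$ (uniform over the finite list of $\fo$'s) with $\lambda_{\min}(G(\fo,n_0))<-3$; take $p$ in Proposition \ref{assohoff} to be this $n_0$. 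Then $G$ contains an induced subgraph with smallest eigenvalue less than $-3$, contradicting $\lambda_{\min}(G)\ge -3$ via interlacing (Lemma \ref{interlacing}(i)). Setting $p$ in the statement to be the resulting $n(12,\phi,\sigma,n_0)$ finishes the proof.

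The main obstacle, and the place requiring genuine care, is verifying that $\widetilde K_{24}$ (the graph $\widetilde K_{2m}$ with $m=12$ from the hypothesis of Proposition \ref{assohoff}) cannot occur as an induced subgraph of $G$ — otherwise Proposition \ref{assohoff} does not apply at all. This should follow from computing or bounding $\lambda_{\min}(\widetilde K_{2m})$: the adjacency matrix of $\widetilde K_{2m}$ is a small explicit quotient matrix, and one checks its smallest eigenvalue drops below $-3$ for all $m\ge 2$ (or at least for $m=12$), so no connected graph with $\lambda_{\min}\ge -3$ contains it. A secondary subtlety is the uniformity of $n_0$ and of the threshold $n$: because Proposition \ref{forbiddenhoff} gives only finitely many isomorphism classes of minimal forbidden $\fo$, we may take the maximum over that finite list of both the required clique size $n_0(\fo)$ and the bounds $\phi,\sigma$, so a single $p$ works for all connected $G$ simultaneously. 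Everything else is a bookkeeping assembly of the cited results.
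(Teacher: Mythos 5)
Your argument is correct and is essentially the paper's own proof: both reduce the statement to excluding the finitely many minimal forbidden fat Hoffman graphs, using Theorem \ref{Ostrowski} to pick a clique size $n_0$ with $\lambda_{\min}(G(\fo,n_0))<-3$ and then Proposition \ref{assohoff} (with $m=12$, and with the $\widetilde{K}_{24}$ hypothesis supplied by $\lambda_{\min}(\widetilde{K}_{24})<-3$) to pull that subgraph into $G$, contradicting $\lambda_{\min}(G)\ge-3$ by interlacing. The only slip is the parenthetical claim that $\lambda_{\min}(\widetilde{K}_{2m})<-3$ for all $m\ge 2$ --- this is false for small $m$ (e.g.\ $\widetilde{K}_4$ has only five vertices, so its smallest eigenvalue is well above $-3$), but it does hold for $m=12$, which is all you need and which you correctly fall back on.
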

\begin{proof}
Let $G$ be a graph with $\lambda_{\min}(G)\geq-3$. As $\lambda_{\min}(\widetilde{K}_{24})<-3$, it follows that $G$ does not contain  $\widetilde{K}_{24}$ as an induced
subgraph. Now it suffices to show that $\go(G,12,r)$ does not contain an isomorphic copy of a minimal forbidden fat Hoffman graph for $\Go_3$ as an induced Hoffman subgraph.
Let $\{\fo_1,\fo_2,\dots,\fo_{s}\}$ be the set of the isomorphism classes of the minimal forbidden fat Hoffman graphs for $\Go_3$. By Theorem \ref{Ostrowski}, we find that for
all $1\leq i \leq s$, there exists a positive integer $p_i^\prime$ such that $\lambda_{\min}(G(\fo_i, p_i^\prime))<-3$, since $\lambda_{\min}(\fo_i)<-3$. Let $p_i=n(12,
|V_f(\mathfrak{h}_i)|,|V_s(\mathfrak{h}_i)|,p_i^\prime)$ and $p = \max_{1\leq i \leq s}p_i$. Then for $r\geq p$, the Hoffman graph $\go(G,12,r)$ does not contain an isomorphic copy of any $\fo_i$ as an induced Hoffman subgraph by Proposition \ref{assohoff}. This completes the proof.
\end{proof}

Now we recall the Ramsey theorem.

\begin{theorem}\label{Ramsey}
(\cite{Ramsey}) Let a, b be two positive integers. Then there exists a minimum positive integer $R(a,b)$ such that for any graph $G$ on $n\geq R(a,b)$ vertices, the graph $G$ contains a dependent set of size $a$ or an independent set of size $b$.
\end{theorem}
The number $R(a,b)$ in the previous theorem is called a Ramsey number.

With Lemma \ref{main1} and Theorem \ref{Ramsey}, we will show the following theorem.
\begin{theorem}\label{main2}
There exists a positive integer $K$, such that if a graph $G$ with smallest eigenvalue at least $-3$ has minimal degree at least $K$, then $G$ is the slim graph of a fat Hoffman graph with smallest eigenvalue at least $-3$.
\end{theorem}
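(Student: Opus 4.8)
The goal is to upgrade Lemma~\ref{main1} from a statement about the associated Hoffman graph having the right smallest eigenvalue to a statement that $G$ \emph{itself} is the slim graph of a fat Hoffman graph in $\Go_3$. The associated Hoffman graph $\go = \go(G,12,r)$ already has $V_s(\go) = V(G)$, its slim graph is $G$, and by Lemma~\ref{main1} (taking $r \geq p$) it has smallest eigenvalue at least $-3$. So $\go$ is a Hoffman graph with the right smallest eigenvalue and with slim graph $G$; the only thing that can fail is that $\go$ need not be \emph{fat}, i.e. some slim vertices of $G$ may lie in no quasi-clique $Q([C_i]_r^m)$ and hence have no fat neighbor in $\go$. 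The whole content of the theorem is therefore to force every vertex of $G$ into some large quasi-clique, which is exactly where the minimal degree hypothesis and Ramsey's theorem come in.

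First I would fix $m = 12$ and $r = p$ (the constant from Lemma~\ref{main1}), and ask: given a vertex $x \in V(G)$ with large degree, must $x$ lie in a maximal clique of $G$ with at least $r$ vertices? Consider the neighborhood $N(x)$, which has size at least $K - 1$. Since $\lambda_{\min}(G) \geq -3$, the graph $G$ contains no $\widetilde K_{24}$ as an induced subgraph, and in particular the subgraph induced on $N(x) \cup \{x\}$ is $\widetilde K_{24}$-free and also has smallest eigenvalue at least $-3$. The key combinatorial fact I want is: a graph on many vertices with smallest eigenvalue at least $-3$ (equivalently with no $\widetilde K_{24}$, plus bounded clique-coclique behaviour) must contain a large clique. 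This is precisely what Ramsey's theorem (Theorem~\ref{Ramsey}) buys: in any graph on at least $R(a,b)$ vertices there is a clique of size $a$ or an independent set of size $b$. An independent set of size $b$ inside $N(x)$ together with $x$ would, for $b$ large enough relative to the eigenvalue bound $-3$, force an induced subgraph (a ``book'' $K_{1,b}$ or similar) whose smallest eigenvalue is below $-3$ — contradiction. Hence for $K := R(r, b)$ with $b$ chosen so that $K_{1,b}$ (or the relevant star-like configuration) already has smallest eigenvalue less than $-3$, every vertex $x$ of $G$ lies in a clique of size at least $r$, hence in a maximal clique of size at least $r$, hence in some quasi-clique $Q([C_i]_r^{12})$, hence has a fat neighbor in $\go(G,12,r)$.

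Having arranged that $\go = \go(G,12,p)$ is fat, has slim graph $G$, and has $\lambda_{\min}(\go) \geq -3$ by Lemma~\ref{main1}, the theorem follows: $G$ is the slim graph of the fat Hoffman graph $\go \in \Go_3$. I would also double-check the numerology: the threshold $n(12,\phi,\sigma,p)$ used inside Lemma~\ref{main1} requires $r \geq (m+1)^2 = 169$, and the Ramsey bound must be taken at least that large, so $K$ should be defined as the maximum of $R(r,b)$ and the internal constants, which is harmless.

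\textbf{Main obstacle.} The delicate point is the Ramsey step: I need to know that a \emph{sufficiently large} independent set sitting in a common neighborhood of a vertex (or, dually, a large ``claw''-type configuration) already pushes the smallest eigenvalue strictly below $-3$, so that its absence forces a large clique in each neighborhood. Concretely one wants the least $b$ with $\lambda_{\min}(K_{1,b}) < -3$ — but $\lambda_{\min}(K_{1,b}) = -\sqrt{b} \to -\infty$, so $b = 10$ already gives $-\sqrt{10} < -3$; the subtlety is only that the independent set lies inside $N(x)$ and one must combine it correctly with $x$ and with the ambient graph so that the resulting induced subgraph genuinely has an eigenvalue below $-3$, and then feed the right $a = r$ into $R(a,b)$. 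Getting these constants to line up with the $(m+1)^2$ constraint and with the constant $p$ from Lemma~\ref{main1} is the bookkeeping that the proof must handle carefully, but there is no conceptual difficulty beyond that.
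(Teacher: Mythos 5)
Your proposal is correct and follows essentially the same route as the paper: the paper also sets $K = R(p,10)$, uses the absence of an induced $K_{1,10}$ (which has smallest eigenvalue $-\sqrt{10} < -3$) to extract via Ramsey a clique of size $p$ in each neighborhood, concludes that $\go(G,12,p)$ is fat, and then invokes Lemma~\ref{main1}. The ``delicate point'' you flag is in fact immediate, since an independent $10$-set in $N(x)$ together with $x$ is exactly an induced $10$-claw.
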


\begin{proof}
Choose $n=p$, where $p$ is such that Lemma \ref{main1} holds. Let $K $ be equal to the Ramsey number $R(n,10)$. For any vertex $x$ of $G$, the neighborhood of $x$ contains a clique with order at least $n$, since $G$ does not contain a $10$-claw as an induced subgraph. This implies that the associated Hoffman graph $\go(G,12,n)$ is fat. By Lemma \ref{main1}, the fat Hoffman graph $\go(G,12,n)$ has smallest eigenvalue at least $-3$ and $G$ is the slim graph of $\go(G,12,n)$.
\end{proof}

Now we are in the position to prove our main result Theorem \ref{mainthm}.
\begin{proof}[{\textit Proof of Theorem \ref{mainthm}}] By using Theorem \ref{cameron}, the theorem holds for graphs with smallest eigenvalue at least $-2$. Now we may assume that $-3\leq\lambda_{\min}(G)<-2$ holds. Let $K$ be a positive integer such that Theorem \ref{main2} holds. Then there exists a fat Hoffman graph $\mathfrak{h}$ with smallest eigenvalue at least $-3$ which has $G$ as the slim graph. Note that $\lfloor\lambda_{\min}(G)\rfloor=\lfloor\lambda_{\min}(\mathfrak{h})\rfloor=-3$. By Lemma \ref{twolattices} and Corollary \ref{graphandhoffman}, it is sufficient to show that the integral lattice $\Lambda^{red}(\frak{h},3)$ is $2$-integrable.

Suppose $N$ is a matrix such that $Sp(\mathfrak{h})+3I=N^TN$ holds. Denote by $N_{r_1},~N_{r_2},~\ldots,~N_{r_n}$ all the non-zero columns of $N$. Since the Hoffman graph $\mathfrak{h}$ is fat, then the norm $\|N_{r_i}\|^2$ of $N_{r_i}$ is $1$ or $2$ for $i=1,\ldots,n$. This means that the lattice $\Lambda^{red}(\frak{h},3)$ is a direct sum of a standard lattice and  irreducible root lattices. As any irreducible root lattice is 2-integrable, we find that $\Lambda^{red}(\frak{h},3)$ is also $2$-integrable. This completes the proof.
\end{proof}
\begin{remark} In Remark \ref{rem1} $\mathrm{(ii)}$, we claimed that there exists a real number  $\varepsilon<-3$, such that for any real number $\lambda\in(\varepsilon,-3]$, there exists a constant $f(\lambda)$, such that if a graph $G$ has smallest eigenvalue at least $\lambda$ and minimal degree at least $f(\lambda)$, then the smallest eigenvalue of $G$ is at least $-3$. The real number $\varepsilon$ can be found as the largest number among the smallest eigenvalues of the minimal forbidden fat Hoffman graphs for $\Go_3$. As there are finitely many isomorphism classes of the minimal forbidden fat Hoffman graphs for $\Go_3$, the number $\varepsilon$ exists.
\end{remark}

We finish this paper with two conjectures and one problem.
\begin{conjecture} There exists a positive integer $\sigma$ such that any connected graph $G$ with smallest eigenvalue at least $-3$ is $\sigma$-integrable.
\end{conjecture}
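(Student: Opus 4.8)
The plan is to use Theorem \ref{mainthm} to reduce to graphs of bounded minimal degree, and then to confine the source of non-$2$-integrability to a controlled list of ``exceptional'' lattices. Let $G$ be a connected graph with $\lambda_{\min}(G)\ge-3$; by Theorem \ref{mainthm} we may assume its minimal degree is less than the constant $K$ of that theorem. Fixing $m=12$ and an integer $n\ge p$ as in Lemma \ref{main1}, I would pass to the associated Hoffman graph $\go=\go(G,12,n)$, whose slim graph is $G$ and whose smallest eigenvalue is at least $-3$. In contrast to the large-degree case, $\go$ need not be fat, but any slim vertex without a fat neighbour has degree less than $n$ in $G$; call these the \emph{poor} vertices and let $S$ be their set. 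On $V_s(\go)\setminus S$ the argument from the proof of Theorem \ref{mainthm} goes through verbatim: the corresponding columns of a Gram factorisation of $Sp(\go)+3I$ have norm at most $2$, so that part of $\Lambda^{\reduced}(\go,3)$ is a direct sum of a standard lattice and irreducible root lattices, and hence is $2$-integrable. Thus all the difficulty is pushed onto the poor vertices and onto how $G[S]$ is attached, along cliques, to the rest of $G$.

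The second step is to understand the lattice carried by the poor vertices. Here I would aim for a finiteness statement in the spirit of Witt's classification of root lattices (Theorem \ref{root}): every irreducible integral lattice of minimal norm $3$ that is generated by its minimal vectors and admits, for a suitable choice of representatives of its minimal vectors up to sign, a Gram matrix of the form $3I+A$ with $A$ the adjacency matrix of a graph, is $1$-integrable, or $2$-integrable, or one of finitely many exceptions --- a list that would include the lattice of $\overline{McL}$ (cf.\ \cite{KRY2}) and the lattices $E_6,E_7,E_8$ which enter through $\widetilde E_6$-type induced subgraphs, and which ought to be accessible through the line-system analysis of Shult and Yanushka \cite{Ernest}. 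Granting such a list, one would take $\sigma$ to be a common multiple of $2$ and of the integrability indices of these finitely many exceptional lattices; since an $s$-integrable lattice is $(ks)$-integrable for every $k\ge1$ and cliques contribute only $1$-integrable pieces, the lattice $\Lambda(G)$, built by gluing such irreducible pieces along cliques, would then be $\sigma$-integrable, uniformly in $G$.

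The main obstacle is exactly this classification. In the norm-$2$ case Witt's theorem reduces everything to a short list, but no comparable finiteness is known for minimal norm $3$, and the families $K(m)$ of Remark \ref{rem1} and $G(\mathfrak p(\widetilde E_6),n)$ of Example \ref{E6} show that the rank of $\Lambda(G)$ is genuinely unbounded, so one cannot simply bound the size of an ``exceptional core.'' A more realistic target is to bound the \emph{defect} of such a lattice --- how far it is from being $1$-integrable --- by exploiting the rigid shape of its Gram matrix $3I+A$ together with the local ($p$-adic) obstructions to $s$-integrability studied by Conway and Sloane \cite{low dimension V} and by Bachoc, Nebe and Venkov \cite{odd unimodular lattices}, and then showing this defect is absorbed by a single scaling factor independent of $G$. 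Proving such a uniform defect bound --- equivalently, ruling out a sequence of connected graphs with smallest eigenvalue at least $-3$ whose lattices require ever larger $s$ --- is the crux, and with the machinery developed above I do not see how to achieve it; this is why the statement is posed only as a conjecture.
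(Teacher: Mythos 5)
The statement you were asked about is posed in the paper only as a conjecture; the paper contains no proof of it, and you were right not to manufacture one. Your honest conclusion --- that the argument cannot be closed with the machinery at hand --- matches the authors' own position, and the obstruction you isolate is the correct one: the proof of Theorem \ref{mainthm} rests entirely on the fact that, for a \emph{fat} Hoffman graph, every column of a Gram factorisation of $Sp(\mathfrak{h})+3I$ has norm at most $2$, so Witt's classification (Theorem \ref{root}) applies; once slim vertices with no fat neighbour are present the corresponding columns have norm $3$, and no analogue of Witt's theorem for minimal norm $3$ is available. That is precisely why the statement remains open.

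Two caveats about your partial reduction. First, a slim vertex of $\mathfrak{g}(G,12,n)$ with no fat neighbour has degree less than the Ramsey number $R(n,10)$ in $G$, not less than $n$; harmless, but worth stating correctly. Second, and more seriously, the sublattice of $\Lambda^{\reduced}(\mathfrak{g},3)$ generated by the columns of norm at most $2$ need not be a direct summand of the whole lattice, so its $2$-integrability says nothing by itself about $\Lambda^{\reduced}(\mathfrak{g},3)$: an $s$-integrable sublattice can sit inside a non-$s$-integrable lattice (the lattice of $\overline{McL}$ contains many $2$-integrable root sublattices yet is not $2$-integrable). You do acknowledge that ``all the difficulty is pushed onto how $G[S]$ is attached'', but this means the reduction buys less than the phrase ``that part is $2$-integrable'' suggests. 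Note also that the paper's second conjecture (uniform $\sigma$-integrability of irreducible integral lattices generated by norm-$3$ vectors) is essentially the finiteness or defect-bound statement you say you would need, so your proposal in effect reduces one open conjecture to the other --- a reasonable observation, and consistent with the paper, but not a proof.
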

\begin{conjecture} There exists a positive integer $\sigma$ such that for any irreducible integral lattice $\Lambda$ generated by vectors of norm $3$, the lattice $\Lambda$ is $\sigma$-integrable.
\end{conjecture}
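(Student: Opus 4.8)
The plan is to recast the lattice statement as a statement about signed graphs and then to push the Hoffman-graph decomposition used in the proof of Theorem~\ref{mainthm} as far as it will go. As explained in the introduction, after choosing a maximal set $U'$ of pairwise non-antipodal minimal vectors one obtains the Gram matrix $L=3I+A$, where $A$ is the signed adjacency matrix of a connected signed graph $\widehat{G}$ with $\lambda_{\min}(\widehat{G})\ge -3$, and $\Lambda=\langle U'\rangle$ is precisely the reduced lattice attached to $\widehat{G}$ at $t=3$. Irreducibility of $\Lambda$ corresponds to connectedness of $\widehat{G}$, so the conjecture is equivalent to the existence of a single $\sigma$ that makes every connected signed graph with smallest eigenvalue at least $-3$ be $\sigma$-integrable. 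First I would build the signed analogue of the fat Hoffman graph theory developed above. The two engines behind the finiteness result, namely the forbidden-matrix result (Theorem~\ref{Vi2}) and the limit result (Theorem~\ref{Hoffman}), are already stated for arbitrary symmetric matrices with integer off-diagonal entries, so they apply verbatim and should again yield that the minimal forbidden fat (signed) Hoffman graphs for the class $\widehat{\Go}_3$ have at most $10$ slim vertices, exactly as in Proposition~\ref{forbiddenhoff}.

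The mechanism that actually produces integrability is fatness. If $\widehat{G}$ is the slim graph of a fat signed Hoffman graph $\mathfrak{h}$ with $\lambda_{\min}(\mathfrak{h})\ge -3$, then in any factorisation $Sp(\mathfrak{h})+3I=N^TN$ every nonzero column of $N$ has norm $1$ or $2$ (a slim vertex with a fat neighbour has one, two, or three fat neighbours, and three fat neighbours gives the zero column), so $\Lambda^{red}(\mathfrak{h},3)$ is an orthogonal sum of a standard lattice and irreducible root lattices and is therefore $2$-integrable, exactly as in the proof of Theorem~\ref{mainthm}. By the signed version of Theorem~\ref{main2} this disposes of every $\widehat{G}$ of large enough minimal degree, and by Lemma~\ref{twolattices} together with Corollary~\ref{graphandhoffman} the integrability transfers back to $\Lambda$ itself. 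Thus $\sigma=2$ already works for the entire fat part of the family.

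The real content, and the main obstacle, is the non-fat part. The cone families $K(m)$ over $\overline{McL}$ are irreducible, of unbounded dimension, and not $2$-integrable, so these signed graphs are provably not slim graphs of fat Hoffman graphs with smallest eigenvalue at least $-3$; in particular $\sigma=2$ is impossible and a genuinely larger uniform bound must be produced. The strategy I would pursue is to peel off, via the associated-Hoffman-graph machinery (Proposition~\ref{assohoff}) and a Ramsey argument, the large cliques of $\widehat{G}$ into fat vertices, splitting $\Lambda$ as an orthogonal sum of a root-and-standard part (handled above) and a residual \emph{core} lattice in which no large clique survives. The crux is then to prove that these cores fall into finitely many isometry classes of bounded dimension, an exact norm-$3$ analogue of the fact that $E_6$, $E_7$, $E_8$ are the only obstructions to $1$-integrability among root lattices (Remark~\ref{root2}), so that one may take $\sigma$ to be the maximum integrability index over this finite list together with $2$. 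Making the peeling exact at the level of lattices rather than merely of smallest eigenvalues, and above all establishing the finiteness and bounded dimension of the non-fat cores (the dense example $\overline{McL}$ shows that a naive degree threshold does not suffice to force fatness), is exactly where the difficulty lies and is the reason this statement is offered only as a conjecture.
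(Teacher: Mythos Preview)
The statement you were given is a \emph{conjecture}, and the paper offers no proof of it; it is simply stated at the end of the paper as an open problem. There is therefore no ``paper's own proof'' to compare against.

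Your proposal is likewise not a proof, and you say so explicitly in the final sentence. What you have written is a research outline: reduce to connected signed graphs with $\lambda_{\min}\ge -3$; extend the Hoffman-graph machinery (which, as you note, is already matrix-theoretic in Theorems~\ref{Hoffman} and~\ref{Vi2}); observe that the fat case goes through verbatim with $\sigma=2$ via the root-lattice argument in the proof of Theorem~\ref{mainthm}; and isolate the non-fat ``cores'' as the genuine obstruction, hoping for a finiteness statement analogous to Remark~\ref{root2}. That is a reasonable plan and correctly identifies where the difficulty lies, but it does not resolve the conjecture.

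One technical caution on your use of the $K(m)$ family. You assert that these give irreducible lattices of unbounded dimension, but the conjecture is about irreducible \emph{lattices}, not connected graphs. Connectedness of the graph does not by itself force irreducibility of $\Lambda(K(m))$, and the rank of $\Lambda(K(m))$ is governed by the multiplicity of the eigenvalue $-3$ in $K(m)$, not by the number of vertices $m+276$. It is entirely possible that $\Lambda(K(m))$ decomposes as a fixed non-$2$-integrable irreducible piece (essentially the $23$-dimensional lattice coming from $\overline{McL}$) together with $1$-integrable summands coming from the attached clique. If so, this family would \emph{support} rather than obstruct the finiteness heuristic you describe. Before invoking $K(m)$ as evidence that the non-fat cores are genuinely unbounded, you would need to check this.
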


We would also like to formulate the following problem: Find more maximal connected graphs with smallest eigenvalue at least $-3$.

%\noindent
\section*{Acknowledgments}
J.H. Koolen is partially supported by the National Natural Science Foundation of China (No. 11471009 and No. 11671376).


\begin{thebibliography}{99}
\bibitem{drg} A.E. Brouwer, A.M. Cohen, A. Neumaier, \emph{Distance-Regular Graphs}, Springer-Verlag, Berlin, 1989.

\bibitem{spectraofgraphs} A.E. Brouwer, W.H. Haemers, \emph{Spectra of graphs}, Springer, New York, 2012.

\bibitem{Cameron} P.J. Cameron, J.M. Goethals, J.J. Seidel, E.E. Shult, Line graphs, root systems, and elliptic geometry, \emph{J. Algebra}, 43:305--327, 1976.

%\bibitem{signgraphs2} P.D. Chawathe, G.R. Vijayakumar, A characterization of signed graphs represented by root system $D_{\infty}$, \emph{European J. Combin.}, 11:523--533, 1990.

\bibitem{low dimension V} J.H. Conway, N.J.A. Sloane, Low-dimensional lattices V. Integral coordinates for integral lattices, \emph{Proc. Roy. Soc. London Ser. A}, 426:211-232, 1989.


\bibitem{code} W. Ebeling, Lattices and codes. A course partially based on lectures by F. Hirzebruch. Advanced Lectures in Mathematics. \emph{Friedr. Vieweg \& Sohn, Braunschweig}, 1994.

\bibitem{Mclaughlin} J.M. Goethals, J.J. Seidel, The regular two-graph on $276$ vertices, \emph{Discrete Math.}, 12:143--158, 1975.

\bibitem{Ch&G} C.D. Godsil, G. Royle, \emph{Algebraic Graph Theory}, Springer-Verlag, New York, 2001.

%\bibitem{Hoff1973} A.J. Hoffman, On spectrally bounded graphs, \emph{A survey of Combinatorial Theory}, North Holland, Amsterdam, 277--283, 1973.

\bibitem{Hoff1977} A.J. Hoffman, On graphs whose least eigenvalue exceeds $-1-\sqrt2$, \emph{Linear Algebra Appl.}, 16:153--165, 1977.

\bibitem{HJAT} H.J. Jang, J.H. Koolen, A. Munemasa, T. Taniguchi, {On fat Hoffman graphs with smallest eigenvalue at least $-3$}, \emph{Ars Math. Contemp.}, 7:105--121, 2014.

\bibitem{HJJ} H.K. Kim, J.H. Koolen, J.Y. Yang, {A structure theory for graphs with fixed smallest eigenvalue}, \emph{Linear Algebra Appl.}, 540:1--13, 2016.

%\bibitem{KLY} J.H. Koolen, Y.R. Li, Q. Yang, On fat Hoffman graphs with smallest eigenvalue at least $-3$-\uppercase\expandafter{\romannumeral2}, submitted.
\bibitem{KRY2} J.H. Koolen, M.U. Rehman, Q. Yang, On the integrability of strongly regular graphs, preprint.

\bibitem{KYY} J.H. Koolen, J.Y. Yang, Q. Yang, A generalization of a theorem of Hoffman, arXiv:1612.07085.

\bibitem{odd unimodular lattices} C. Bachoc, G. Nebe, B. Venkov, Odd unimodular lattices of minimum $4$, \emph{Acta Arith.}, 101:151--158, 2002.

\bibitem{desigh} G. Nebe, B. Venkov, On lattices whose minimal vectors form a $6$-design, \emph{European J. Combin.}, 30:716--724, 2009.

%\bibitem{Neumaier} A. Neumaier, Some relations between roots, holes, and pillars, \emph{European J. Combin.}, 8:29--33, 1987.

\bibitem{Ramsey} F.P. Ramsey, {On a problem of formal logic}, \emph{Proc. London Math. Soc.}, 30:264--286, 1930.

\bibitem{Ernest} E. Shult, A. Yanushka, Near $n$-gons and line systems, \emph{Geom. Dedicata}, 9::1--72, 1980.

\bibitem{signgraph1} G.R. Vijayakumar, Signed graphs represented by $D_{\infty}$, \emph{European J. Combin.}, 8:103--112, 1987.

\bibitem{Witt} E. Witt, Spiegelungsgruppen und Aufz\"{a}hlung halbeinfacher Liescher Ringe, \emph{Abh. Math. Sem. Hansischen Univ.}, 14:289--322, 1941.

\bibitem{Woo} R. Woo, A. Neumaier, {On graphs whose smallest eigenvalue is at least $-1-\sqrt2$}, \emph{Linear Algebra Appl.}, 226--228:577--591, 1995.
\end{thebibliography}
\end{document}